\numberwithin{equation}{section}
\newtheorem{theorem}{Theorem}[section]
\newtheorem{proposition}{Proposition}[section]
\newtheorem{lemma}{Lemma}[section]
\theoremstyle{definition}
\def\XXint#1#2#3{{\setbox0=\hbox{$#1{#2#3}{\int}$}
     \vcenter{\hbox{$#2#3$}}\kern-.5\wd0}}
\def\e{\varepsilon}
\def\R{{\mathbb R}}
\def\RN{\mathbb{R}^2}
\def\ulm{{u}_{1,n}}
\def\lm0{ n >0}
\def\pl{\phi_{1, n }}
\def\nl{{u}_{2,n}}
\def\wlm{w_{1, n }}
\def\wlmt{w_{2, n }}
\def\R{\mathbb{R}}
\def\RN{\mathbb{R}^2}
\def\e{\varepsilon}
\begin{document}

\title[The asymptotic behavior of Chern-Simons Vortices for Gudnason Model]{The asymptotic behavior of Chern-Simons Vortices for Gudnason Model}

\author{Youngae Lee}
\address[Youngae Lee]
{Department of Mathematics Education, Teachers College, Kyungpook National University, Daegu, South Korea}
\email{youngaelee@knu.ac.kr}

\begin{abstract}
We consider an elliptic system arising from  a  supersymmetric gauge field theory. In this paper, we complete to classify all possible solutions according to their asymptotic  behavior under a weak coupling effect. Interestingly, it turns out that one of components does not follow the   feature of  condensate solutions
for the classical Chern-Simons-Higgs model.  Moreover, in order to prove the concentration property of blow up component, we need to improve the convergence rate  and the gradient estimation for  the other component, which converges to a constant.  We expect that this result would provide an   insight for the study of general elliptic system problems, which are even neither cooperative nor competitive.
\end{abstract}

\date{\today}
\keywords{blow up analysis; asymptotic behavior; Pohozaev Identity; doubly periodic solution; 35B40; 35J20}

\maketitle

\section{Introduction}\label{sec1}
 In various areas of physics,    solitons play an important role as static  solutions of  an Euler-Largrange equation carrying finite energy. The  bi-dimensional soliton solutions (vortices)  have been   interdisciplinary topics between particle physics and condensed matter physics (see \cite{A,     CN, CS1,  H,  JZ,    PM, R} and references therein). For  the study of  high temperature superconductivity \cite{KF, MNK}, the Bose-Einstein condensates \cite{IGRCGGLPK,KO}, the quantum Hall effect \cite{So}, optics \cite{BEC}, and superfluids \cite{Sh}, we need to consider  the dyonic vortices, which are   electrically and magnetically charged vortices  (see also \cite{ B, DGP, D, DJPT, GL,  HKP,  JT, JW, Ta} and references therein).  In order to study the coexistence of electric and magnetic charges, Chern-Simons theories have been extensively developed (see \cite{ BCCT, BT, CY, CI, CFL,   CHMY, CK,  DEFM, H2, HKP,  HT, JTa,  LY1, LY3, NT1, NT2, NT3, JW, SYa1, SYa2,  T1,T3, W, Y3,Y4} and references therein).
Among them, we are mainly interested in    non-Abelian Chern-
Simon-Higgs vortices in the Gudnason model. In \cite{G1, G2}, the Gudnason model  was proposed to  investigate   a supersymmetric
gauge field theory  (see also \cite{ ABJM, ABEKY,  BL2, Gu,  HSZ, HTo,MY,SY1, SY2} and references therein for the backgrounds).    In this paper, we are concerned with the following elliptic system, which  can be reduced from   the Gudnason model (see \cite{G1, G2, HLTY} for the detail):
\begin{equation}\label{eq0}
\left\{\begin{array}{l}
\Delta {u}_{1,n}=(\alpha_n+\beta_n)^2 e^{{u}_{1,n}}(e^{{u}_{1,n}}-1)+(\alpha_n-\beta_n)^2 e^{{u}_{2,n}}(e^{{u}_{1,n}}-1)-(\beta_n^2-\alpha_n^2) (e^{{u}_{1,n}}+e^{{u}_{2,n}})(e^{{u}_{2,n}}-1)\\\quad\quad\ +8\pi\sum_{i=1}^N m_{i}\delta_{p_i},\\ \\
\Delta {u}_{2,n}=(\alpha_n+\beta_n)^2 e^{{u}_{2,n}}(e^{{u}_{2,n}}-1)+(\alpha_n-\beta_n)^2 e^{{u}_{1,n}}(e^{{u}_{2,n}}-1)-(\beta_n^2-\alpha_n^2) (e^{{u}_{1,n}}+e^{{u}_{2,n}})(e^{{u}_{1,n}}-1),
\end{array}
\right.
\end{equation}
where   $\alpha_n, \beta_n>0$ are positive parameters,   $\delta_{p_i}\in \mathbb{T}$ stands for the
Dirac measure concentrated at $p_i$, and $p_i\neq p_j$ if $i\neq j$.  Each $p_i$ is called a vortex point  and  $m_i\in\mathbb{N}$ is  the multiplicity of $p_i$.   Due to the theory suggested by ’t Hooft  in \cite{'tH}, we can consider the equation \eqref{eq0} in a flat two torus $\mathbb{T}$.  Indeed, the periodic boundary
condition is motivated by the  lattice structures forming in a
condensed matter system (for example, see \cite{A, T3}).

Recently, Han, Lin, Tarantello, and Yang in \cite{HLTY} showed  that there exist at least two gauge-distinct solutions  of \eqref{eq0}  in   $\mathbb{T}$.  The authors in \cite{HLTY} also established the existence  result of solutions  for \eqref{eq0} in $\mathbb{R}$, which vanish at infinity, with  an arbitrary number of components $u_{i, n}$, $i=1,\cdots, m$,  $m\ge 2$. In  order to obtain the existence result, they applied   a minimization approach. For the recent developments of \eqref{eq0},  we refer the readers to \cite{CHLS, CL1,CL2, CKL, GGW, HH, HT, HT1, HL, LY2, PT, T2}.

Our main goal in this paper is to classify all possible solutions according to their asymptotic  behavior  as $\alpha_n, \beta_n\to \infty$.
Throughout this paper, we assume that \begin{equation}\label{assume1}\alpha_n>0,\ \ \beta_n>0\ \  \textrm{for all}\ n\ge 1,\ \textrm{and}\ \lim_{n\to\infty}(\alpha_n+\beta_n)=+\infty.\end{equation}
 In order to  carry out blow up analysis, we firstly need to obtain the uniform boundedness of  $L^1$ norm  for nonlinear terms in \eqref{eq0} with respect to $\alpha_n, \beta_n>0$. However,  if $|\beta_n-\alpha_n|$ is not small enough, then  the system \eqref{eq0} might be  neither cooperative nor competitive, and it causes a difficulty to derive the uniform boundedness of  $L^1$ norm  for nonlinear terms.  In order to avoid this difficulty, we assume the weak coupling effect such that  there is a constant $\mathfrak{N}>0$ satisfying
\begin{equation}\label{assume2}0<(\beta_n-\alpha_n)(\alpha_n+\beta_n)\le \mathfrak{N}\quad \textrm{for}\quad n\ge 1.\end{equation}
In view of  the conditions  \eqref{assume1} and \eqref{assume2}, we note that \begin{equation}\label{assume3}\lim_{n\to\infty}\left(\frac{\beta_n-\alpha_n}{\alpha_n+\beta_n}\right)=0.\end{equation}
For the simplicity, let \begin{equation}\label{epsilon}\varepsilon_n:=\frac{1}{\alpha_n+\beta_n},\quad \sigma_n:=\frac{\beta_n-\alpha_n}{\alpha_n+\beta_n}\end{equation}
We can rewrite \eqref{eq0} as follows:
\begin{equation}\label{main_eq}
\left\{\begin{array}{l}
\Delta {u}_{1,n}=\frac{1}{\varepsilon_n^2} \left\{e^{{u}_{1,n}}(e^{{u}_{1,n}}-1)+\sigma_n^2 e^{{u}_{2,n}}(e^{{u}_{1,n}}-1)-\sigma_n (e^{{u}_{1,n}}+e^{{u}_{2,n}})(e^{{u}_{2,n}}-1)\right\}\\\quad\quad\ +8\pi\sum_{i=1}^N m_{i}\delta_{p_i},\\ \\
\Delta {u}_{2,n}=\frac{1}{\varepsilon_n^2}\left\{e^{{u}_{2,n}}(e^{{u}_{2,n}}-1)+\sigma_n^2 e^{{u}_{1,n}}(e^{{u}_{2,n}}-1)-\sigma_n (e^{{u}_{1,n}}+e^{{u}_{2,n}})(e^{{u}_{1,n}}-1)\right\},
\end{array}
\right.
\end{equation}
%In view of Theorem \ref{BrezisMerletypealternatives} above, the equation \eqref{main_eq}  has different kinds of periodic solutions
%satisfying one of the following asymptotic behaviors:\begin{equation}
%\label{def_sol}
%\begin{aligned}
  %\left({u}_{1,n},{u}_{2,n}\right) \to (0,0) \quad\mbox{a.e. on }~ \mathbb{T} \quad\mbox{as }~ {n}\to\infty,
% \quad&\mbox{(topological solution)} \\
%\textrm{either}\ \left({u}_{1,n},{u}_{2,n}\right) \to (-\infty,0)  \ \ \textrm{a.e. or}\ \ \left({u}_{1,n},{u}_{2,n}\right) \to (0,-\infty) \quad\mbox{a.e. on }~ \mathbb{T} \quad\mbox{as }~ {n}\to\infty,
% \quad&\mbox{(mixed type solution)} \\
 % \left({u}_{1,n},{u}_{2,n}\right) \to (-\infty,-\infty) \quad\mbox{a.e. on }~ \mathbb{T} \quad\mbox{as }~ {n}\to\infty,
 %\quad&\mbox{(nontopological solution)}
%\end{aligned}
%\end{equation}
In order to describe our main result precisely, we introduce some notations.
Firstly,  let  $G(x,y)$ be the Green's function  satisfying
\begin{equation*}
-\Delta_x G(x,y)=\delta_y-\frac{1}{|\mathbb{T}|},\quad\int_{\mathbb{T}}G(x,y)dy=0,
\end{equation*}
where $|\mathbb{T}|$ is the measure of $\mathbb{T}$ (see \cite{Au}).  We denote the regular part of $G(x,y)$ by \[\gamma(x,y)=G(x,y)+\frac{1}{2\pi}\ln|x-y|.\]
Let
\begin{equation}\label{u_0}
u_0(x)=-8\pi \sum_{i=1}^N m_{i}G(x,p_i).
\end{equation}
Now we have  the following Brezis-Merle type alternatives  result for \eqref{main_eq}.
\begin{theorem}\label{BrezisMerletypealternatives}
Let $Z\equiv\cup_{i=1}^N\{p_{i}\}$.
We assume that  $\{({u}_{1,n}, {u}_{2,n})\}$ is a sequence of solutions of \eqref{main_eq}.

(i)  as ${ n \to\infty}$, up to subsequences, ${u}_{1,n}$ satisfies one of the followings:

\begin{description}
\item[(f1)] ${u}_{1,n}\to 0$ uniformly on any compact subset of $\mathbb{T}\setminus  Z$, or
\item[(f2)]   $\ulm-2\ln\varepsilon_n-u_0\to\hat{w}$ in $C^{1}_{\textrm{loc}}(\mathbb{T})$, where $\hat{w}$ satisfies  $\Delta \hat{w} +e^{\hat{w}+u_0}=8\pi\mathfrak{M}$, or
\item[(f3)]  there exists a nonempty finite set $\mathfrak{B}=\{{q}_1,\cdots,{q}_k\}\subset\mathbb{T}$ and $k$-number of sequences  of points $q^{j}_{ n }\in\mathbb{T}$ such that $\lim_{ n \to\infty }q^{j}_{ n }={q}_j$, $\left({u}_{1,n}-2\ln\varepsilon_n\right)(q^{j}_{ n })\to+\infty$, and ${u}_{1,n}-2\ln\varepsilon_n\to-\infty$ uniformly on any compact subset of $\mathbb{T}\setminus B$. Moreover,
$\frac{1}{\varepsilon_n^2}e^{{u}_{1,n}}\left(1-e^{{u}_{1,n}}\right)\to\sum_j\alpha_j\delta_{{q}_j},\ \ \alpha_j\ge8\pi,$
in the sense of measure.
\end{description}

(ii)  as ${ n \to\infty}$, up to subsequences, ${u}_{2,n}$ satisfies one of the followings:

\begin{description}
\item[(s1)]  there is a constant $c_0>0$, independent of $n\ge1$, satisfying  $\|{u}_{2,n}\|_{L^\infty(\mathbb{T})}\le c_0 \varepsilon_n^2$, or
\item[(s2)]   $\sup_{\mathbb{T}}\left({u}_{2,n}-2\ln\varepsilon_n \right)\to-\infty$.
\end{description}

\end{theorem}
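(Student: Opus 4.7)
The approach is to treat \eqref{main_eq} as a perturbation of two decoupled Chern--Simons--Higgs equations. Indeed, \eqref{assume2} gives $\sigma_n=O(\varepsilon_n^2)$, so the cross-interaction terms $\sigma_n\varepsilon_n^{-2}(\cdot)$ and $\sigma_n^2\varepsilon_n^{-2}(\cdot)$ appearing in \eqref{main_eq} are of order $O(1)$ and $o(1)$ respectively, and enter only as controlled perturbations at leading order. With that in mind, the proof proceeds in three main steps.

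\textbf{Step 1: uniform integral bounds.} First I would integrate both equations over $\mathbb{T}$, using $\int_{\mathbb{T}}\Delta u_{i,n}=0$. The first equation yields the identity
$$\int_{\mathbb{T}}\bigl\{e^{u_{1,n}}(1-e^{u_{1,n}})+\sigma_n^2 e^{u_{2,n}}(1-e^{u_{1,n}})-\sigma_n(e^{u_{1,n}}+e^{u_{2,n}})(1-e^{u_{2,n}})\bigr\}=8\pi N_0\,\varepsilon_n^2,$$
where $N_0:=\sum_i m_i$. A maximum-principle argument in the coupled system (using smallness of $\sigma_n$ to dominate the mixed term) gives $e^{u_{i,n}}\le 1$ on $\mathbb{T}\setminus Z$ for $n$ large, so rearranging and bounding the coupling term by $2\sigma_n|\mathbb{T}|$ produces $\int_{\mathbb{T}} e^{u_{1,n}}(1-e^{u_{1,n}})=O(\varepsilon_n^2)$. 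The analogous integration of the second equation, which carries no Dirac source, yields the sharper bound $\int_{\mathbb{T}} e^{u_{2,n}}(1-e^{u_{2,n}})=O(\sigma_n)=O(\varepsilon_n^2)$.

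\textbf{Step 2: dichotomy for $u_{2,n}$.} Assume (s2) fails, so that $\sup_{\mathbb{T}}(u_{2,n}-2\ln\varepsilon_n)$ stays bounded below along a subsequence. Combining this with the integral bound from Step 1 and Harnack/elliptic estimates applied to the $u_{2,n}$-equation, I would upgrade the control to the uniform convergence $u_{2,n}\to 0$ in $L^\infty(\mathbb{T})$. Then I would linearize, using $e^{u_{2,n}}(e^{u_{2,n}}-1)=u_{2,n}+O(u_{2,n}^2)$ and that the cross-interaction is $O(\sigma_n)$, to rewrite the equation as $\varepsilon_n^2\Delta u_{2,n}-u_{2,n}=O(\sigma_n+u_{2,n}^2)$. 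The operator $\varepsilon_n^2\Delta-\mathrm{Id}$ on the fixed torus $\mathbb{T}$ has an $L^\infty\to L^\infty$ inverse of norm $O(1)$, uniformly in $n$, so this forces $\|u_{2,n}\|_{L^\infty}=O(\sigma_n)=O(\varepsilon_n^2)$, which is precisely (s1). The hard point here, and the main obstacle of the theorem, is precisely this upgrade from integral to uniform bounds: because the system is neither cooperative nor competitive, no direct maximum principle for $u_{2,n}$ alone is available, and one must combine $L^1$ control with gradient estimates on $u_{1,n}$.

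\textbf{Step 3: trichotomy for $u_{1,n}$.} Set $v_{1,n}:=u_{1,n}-u_0$, which is smooth on $\mathbb{T}$. The equation for $v_{1,n}$ takes a Chern--Simons--Higgs form with cross terms now bounded via the (s1)/(s2) alternative of Step 2. I would then apply the classical Brezis--Merle alternative for such CSH equations: either $v_{1,n}$ remains locally bounded and its limit satisfies $\hat w\equiv 0$, giving (f1); or the shifted sequence $u_{1,n}-2\ln\varepsilon_n-u_0$ converges in $C^1_{\mathrm{loc}}(\mathbb{T})$ to a solution $\hat w$ of the singular Liouville equation $\Delta\hat w+e^{\hat w+u_0}=8\pi\mathfrak{M}$, giving (f2); or blow-up occurs at a finite set $\mathfrak{B}$, with the $8\pi$-quantization of each concentrated mass coming from a Pohozaev identity together with the Brezis--Merle/Li--Shafrir concentration-compactness lemma, giving (f3). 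Finally, extracting a common subsequence reconciling the dichotomy for $u_{2,n}$ with the trichotomy for $u_{1,n}$ completes the proof.
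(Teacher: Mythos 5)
Your overall architecture (uniform $L^1$ bounds from integrating the system, a dichotomy for $u_{2,n}$ whose rate comes from the uniform resolvent bound for $\varepsilon_n^2\Delta-\mathrm{Id}$, and a CSH-type Brezis--Merle/Pohozaev analysis for $u_{1,n}$) is the same as the paper's, and Step 1 and the linearization at the end of Step 2 are correct (the resolvent estimate is exactly Theorem \ref{theoremb}). The genuine gap is in the middle of Step 2, and it is exactly the point you flag but do not resolve: the passage from ``(s2) fails'' to ``$u_{2,n}\to 0$ in $L^\infty(\mathbb{T})$'' cannot be obtained from ``the integral bound plus Harnack/elliptic estimates.'' Negating (s2) only gives $\sup_{\mathbb{T}}w_{2,n}\ge -C$ with $w_{2,n}=u_{2,n}-2\ln\varepsilon_n$, and you must exclude all intermediate behaviors for the second component: (a) mixed profiles where $u_{2,n}$ is close to $0$ in one region and very negative in another, and (b) the analogues of (f2)/(f3) for $u_{2,n}$, i.e.\ $w_{2,n}$ converging to a nontrivial mean-field profile or bubbling with quantized mass. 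For (a) the paper uses the rescaling/classification argument of Lemma \ref{uniformestimates} (intermediate value theorem plus the lower bound and bijectivity of $\beta(s)$ in Lemma \ref{propertyofentiresolution}), not Harnack alone. For (b) the decisive mechanism, which your proposal never produces, is the vanishing-mass estimate \eqref{contminimal} of Lemma \ref{2ndalternatives}: when $\sup_{\mathbb{T}}u_{2,n}\le-\nu_0$, the $w_{2,n}$-equation integrated against the mass identity for the \emph{first} equation gives $\int_{\mathbb{T}}e^{w_{2,n}}\le C\sigma_n\to 0$ by \eqref{assume3}; since any blow-up point of $w_{2,n}$ would carry mass at least $8\pi$ (Chen--Li / Lemma \ref{propertyofentiresolution} after rescaling), bubbling is excluded, and Harnack then forces $\sup_{\mathbb{T}}w_{2,n}\to-\infty$, i.e.\ (s2). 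Without this $\sigma_n\to0$ argument your dichotomy ``(s1) or (s2)'' is asserted rather than proved, and this is precisely the non-CSH feature of $u_{2,n}$ that the theorem is about; note also that your suggested remedy (``gradient estimates on $u_{1,n}$'') is not the ingredient actually used.

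A second, smaller gap is in Step 3: invoking the classical CSH alternative and ``a Pohozaev identity'' is not enough to get the concentration statement in (f3) for the coupled system, because the Pohozaev identity \eqref{poho} contains boundary and bulk terms in $u_{2,n}$ at scale $\varepsilon_n^{-2}$. Controlling them requires both the refined alternative \eqref{1} (the $O(\varepsilon_n^2)$ rate in (s1), obtained via Theorem \ref{theoremb}, or the full strength of (s2)) and the improved gradient bound $\|\nabla u_{2,n}\|_{L^\infty(\mathbb{T})}\le C$ of Lemma \ref{2nd_grad}, which itself must be derived (the general bound of Lemma \ref{grad_sol} only gives $C/\varepsilon_n$ and is too weak). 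Your phrase ``cross terms now bounded via the (s1)/(s2) alternative'' gestures at this but omits the gradient estimate entirely, so as written the quantization and the statement $u_{1,n}-2\ln\varepsilon_n\to-\infty$ off the blow-up set are not justified for the perturbed system.
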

One might expect that the equation \eqref{main_eq}  is almost decoupled system, and can be regarded  as a perturbation of the following equation arising from the classical Chern-Simons-Higgs (CSH) model:
\begin{equation}\label{cs_eq}
 \Delta u_{\varepsilon}=\frac{1}{\varepsilon^2}e^{u_\varepsilon}\left( e^{u_\varepsilon}-1\right)+8\pi\sum_{i=1}^N m_{i}\delta_{p_i}
\quad \mbox{ in }\mathbb{T}.
\end{equation}
Based on the arguments for Brezis-Merle type alternatives, the   corresponding result  for (CSH) equation \eqref{cs_eq} has been proved in \cite{CK}   (see also  \cite{BCCT,BT,BM,CK, NT1,NT2}). However,  we note that unlike  the first component $u_{1,n}$,  the asymptotic behavior $\bf{(s2)}$ for the second component $u_{2,n}$ in Theorem \ref{BrezisMerletypealternatives} does not follow the usual behavior for (CSH) model.  Moreover, we also remark that the refinement for  the asymptotic behavior of $u_{2,n}$ is essential   to obtain the concentration property for  blow up component, that is, $\bf{(f3)}$.

The paper is organized as follows. In Section 2, we review some preliminaries.
In Section 3, we analyze the asymptotic behavior of solutions and prove Theorem   \ref{BrezisMerletypealternatives}.

\section{Preliminaries}\label{sec2}
Let $w$ satisfy
\begin{equation}
\begin{aligned} \label{limitingpro}
\Delta w + e^w(1-e^w)=4\pi m\delta_0\ \ \textrm{in}\ \mathbb{R}^2.
\end{aligned}
\end{equation}
We recall the property of  a solution $w$ to \eqref{limitingpro}  as follows.
\begin{lemma}\label{lemma2.1}
\cite{BM,ChL2} \cite[Lemma 3.2]{CK} Let $m$ be a nonnegative integer, and $w$ be a  solution of \eqref{limitingpro}.\\
If $e^w(1-e^w) \in L^1(\mathbb{R}^2)$, then either

(i) $w(x)\to0$ as $|x|\to\infty$, or

(ii) $w(x)=-\beta\ln |x| + O(1)$ near $\infty$, where
$\beta=-2m+{\frac{1}{2\pi}}\int_{\mathbb{R}^2} e^w(1-e^w)dx.$

\noindent
Assume that $w$ satisfies the boundary condition (ii). Then we have
\[\int_{\mathbb{R}^2}e^{2w} dx = \pi (\beta^2-4\beta-4m^2-8m), \  \textrm{and}\
\int_{\mathbb{R}^2}e^{w} dx = \pi (\beta^2-2\beta-4m^2-4m).\]
In particular,
$\int_{\mathbb{R}^2} e^w(1 -e^w)dx >8\pi(1 +m).$
\end{lemma}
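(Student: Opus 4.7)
The plan is to combine a potential-theoretic asymptotic analysis at infinity with Pohozaev-type integration by parts. Setting $f := e^w(1-e^w)\in L^1(\R^2)$ and removing the source via $w(x) = 2m\ln|x| + v(x)$, one has $\Delta v = -f$ on all of $\R^2$. A Brezis--Merle type bound based on $f\in L^1$ should give $e^w\in L^p_{\mathrm{loc}}$ for every $p<\infty$, which together with the Newton-potential representation and Liouville's theorem forces
\[
v(x) = -\tfrac{1}{2\pi}\int_{\R^2}\ln|x-y|\,f(y)\,dy + C.
\]
Expanding $\ln|x-y|=\ln|x|+O(|y|/|x|)$ for $|x|$ large then yields $w(x) = -\beta\ln|x| + C + o(1)$ with $\beta = -2m + \frac{1}{2\pi}\int f$. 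If $\beta=0$, the bounded limit $C$ must vanish (otherwise $e^w(1-e^w)$ would not decay at infinity, contradicting $f\in L^1$), giving case~(i); if $\beta\neq 0$, integrability at infinity forces $\beta>0$, which is case~(ii).

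For the two integral identities assumed in~(ii), I would derive a $2\times 2$ linear system in $I_1 := \int e^w\,dx$ and $I_2 := \int e^{2w}\,dx$. The first equation comes from integrating the PDE over $B_R$: the divergence theorem together with the matched gradient asymptotics $\partial_\nu w\sim -\beta/|x|$ on $\partial B_R$ yields $I_1 - I_2 = \int f = 2\pi(\beta + 2m)$, recovering the defining identity for $\beta$. The second is the Pohozaev identity, obtained by multiplying the PDE by $x\cdot\nabla w$ and integrating over $B_R\setminus B_\delta$, then sending $\delta\to 0$ and $R\to\infty$. The bulk nonlinear term transforms via $(x\cdot\nabla w)(e^w - e^{2w}) = x\cdot\nabla\bigl(e^w - \tfrac{1}{2}e^{2w}\bigr)$ into a linear combination of $I_1$ and $I_2$; the inner boundary $\partial B_\delta$ contributes terms quadratic in $m$ (via $w\sim 2m\ln|x|$ near $0$), while $\partial B_R$ contributes terms quadratic in $\beta$ (via $w\sim -\beta\ln|x|$ at $\infty$). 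Solving this system should produce the stated closed forms for $I_1$ and $I_2$.

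The strict bound $\int e^w(1-e^w) > 8\pi(1+m)$ then follows by combining $\int f = 2\pi(\beta+2m)$ with the positivity $I_2>0$: the latter forces $(\beta-2)^2 > 4(m+1)^2$, and since case~(ii) requires $\beta>0$ the admissible branch is $\beta>2m+4$, whence $\int f > 8\pi(m+1)$. The main technical obstacle I anticipate is the justification of the boundary expansions in the Pohozaev identity as $R\to\infty$: extracting the gradient asymptotics $|\nabla w|^2 = \beta^2/|x|^2 + o(1/|x|^2)$ from only $f\in L^1$ requires differentiating the representation formula and carefully controlling the convolution tails. Ruling out oscillation when $\beta = 0$, so that $w$ genuinely admits a limit (necessarily $0$), is the other delicate point and is typically handled via a Harnack-type argument applied to $w - C$.
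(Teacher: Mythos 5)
The paper itself gives no proof of this lemma (it is recalled verbatim from \cite{BM,ChL2} and \cite[Lemma 3.2]{CK}), and your proposal reconstructs precisely the standard argument of those references: the Newtonian-potential representation of $w-2m\ln|x|$ for the asymptotic dichotomy, then integration of the equation over $B_R$ plus the Pohozaev identity on $B_R\setminus B_\delta$, which yield the linear system $I_1-I_2=2\pi(\beta+2m)$, $2I_1-I_2=\pi(\beta^2-4m^2)$ whose solution is exactly the two stated integral identities, and finally $I_2>0$ together with $\beta>2$ forcing $\beta>2m+4$, i.e.\ $\int_{\mathbb{R}^2}e^w(1-e^w)\,dx>8\pi(1+m)$. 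The outline and the algebra are correct, and the technical points you flag (the $O(1)$ refinement of $w+\beta\ln|x|$, the gradient asymptotics $\nabla w\sim-\beta x/|x|^2$ on $\partial B_R$, and ruling out oscillation in the $\beta=0$ case) are exactly the steps carried out in the cited works, so this matches the intended proof.
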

When $m=0$, it has been known that $\int_{\mathbb{R}^2}e^w(1-e^w)dx$ depends on the maximum value of $w$, and has a lower bound as follows.
\begin{lemma}\cite[Theorem 2.1]{CFL} \cite[Theorem 3.2]{CHMY}  \cite[Theorem 2.2]{SYa1}\label{propertyofentiresolution}
Let $m=0$, and $w$ be a  solution of \eqref{limitingpro} with $e^w(1-e^w)\in L^1(\mathbb{R}^2)$. Then, $w(x)$ is smooth,  radially symmetric with respect to some point $x_0$ in $\mathbb{R}^2$, and strictly decreasing function of $r=|x-x_0|$.

Assume $w(r;s)$ be the radially symmetric  solution with respect to $0$ of \eqref{limitingpro} such that \[\lim_{r\to0}w(r; s)=s,\ \  \textrm{and}\ \  \lim_{r\to0}w'(r; s)=0,\]
where $w'$ denotes $\frac{dw}{dr}(r; s)$, and let us set
\begin{equation}\label{defbetas}
\beta(s)\equiv\frac{1}{2\pi}\int_{\mathbb{R}^2} e^{w(r; s)}(1-e^{w(r; s)})dx=\int^\infty_0 e^{w(r; s)}(1-e^{w(r; s)})rdr.
\end{equation}Then one has

(i) $\beta(0)=0$ and $w(\cdot;0)\equiv0$;

(ii) $\beta: (-\infty, 0)\rightarrow(4,+\infty)$ is strictly increasing, bijective, and
$$\lim_{s\to-\infty}\beta(s)=4, \textrm{ and }\lim_{s\to 0_-}\beta(s)=+\infty.$$
\end{lemma}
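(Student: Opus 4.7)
The plan is to establish the three statements of the lemma in order: the qualitative description (smoothness, radial symmetry, strict monotonicity) of an arbitrary solution $w$ with $e^w(1-e^w)\in L^1(\mathbb{R}^2)$, existence of the parametrized family $w(r;s)$ together with $\beta(0)=0$, and finally the monotonicity and limiting values of $\beta$.

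\textbf{Step 1 (Arbitrary solutions: smoothness, sign, symmetry).} Smoothness is standard elliptic bootstrap on the smooth nonlinearity $g(w):=e^w(1-e^w)$. To apply moving planes I would first establish $w\le 0$ on $\mathbb{R}^2$: by Lemma \ref{lemma2.1} either $w(x)\to 0$ or $w(x)=-\beta\log|x|+O(1)$ at infinity, so $M:=\sup w$ is either realized at a finite point or as a translation limit $\tilde{w}(0)=M$ of a maximizing sequence; if $M>0$ then $\Delta\tilde{w}=-g(\tilde{w})>0$ at a maximum, contradicting the maximum principle. With $w\le 0$, $g(w)\ge 0$ and $g'(0)=-1$, which is exactly the sign pattern needed for a Chen--Li style moving-planes argument; the decay from Lemma \ref{lemma2.1} supplies the required control at infinity. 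This yields radial symmetry about some $x_0$ with $w_r\le 0$, and the Hopf lemma upgrades it to $w_r<0$ for $r>0$.

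\textbf{Step 2 (The family $w(r;s)$ and $\beta(0)=0$).} For the IVP $w_{rr}+w_r/r+g(w)=0$ with $w(0)=s$, $w_r(0)=0$, local smooth existence is standard at the regular singular point $r=0$. For $s=0$ the unique solution is $w\equiv0$, which gives $\beta(0)=0$. For $s<0$, the Hamiltonian $H(r):=\tfrac12 w_r^2+F(w)$ with $F(w):=e^w-\tfrac12 e^{2w}-\tfrac12$ satisfies $H'(r)=-w_r^2/r\le 0$ and $F\ge -\tfrac12$, yielding the a priori bound $s\le w(r)\le 0$ and global existence on $[0,\infty)$. The divergence identity $2\pi R w_r(R)=-\int_{|x|\le R} g(w)\,dx$ combined with $g(w)\in L^1$ gives the asymptotic $w(r)=-\beta(s)\log r+O(1)$; in particular $w(r)\to-\infty$, $\beta(s)$ is finite, and Lemma \ref{lemma2.1} already gives $\beta(s)>4$ for every $s<0$.

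\textbf{Step 3 (Behavior of $\beta$).} Continuous dependence on $s$ gives continuity of $\beta$, and strict monotonicity $\beta'(s)<0$ follows from a shooting argument as in \cite{CFL,CHMY,SYa1}: the linearization $\psi:=\partial_s w$ solves $\psi_{rr}+\psi_r/r+g'(w)\psi=0$ with $\psi(0)=1$, $\psi_r(0)=0$, and the asymptotic $\psi(r;s)\sim-\beta'(s)\log r$ at infinity together with a zero-counting argument gives $\psi>0$ on $[0,\infty)$. For the limit $s\to-\infty$, set $v(\rho):=w(e^{-s/2}\rho;s)-s$, so that $\Delta v+e^v(1-e^{s+v})=0$ with $v(0)=0$, $v_\rho(0)=0$; the limiting equation is Liouville's $\Delta v_\infty+e^{v_\infty}=0$ whose unique symmetric solution with these initial conditions is $v_\infty(\rho)=-2\log(1+\rho^2/8)$, and a direct change of variable identifies $\int_{\mathbb{R}^2}g(w(\cdot;s))\,dx\to\int_{\mathbb{R}^2}e^{v_\infty}dy=8\pi$, giving $\beta(s)\to 4$. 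For $s\to 0_-$ I would argue by contradiction: if $\beta(s_n)$ stayed bounded along $s_n\to 0_-$, the uniform asymptotic $w(\cdot;s_n)\sim-\beta(s_n)\log|x|$ with $\beta(s_n)>4$ together with continuous dependence would produce a nontrivial radial decaying solution $w^*$ of $\Delta w^*+g(w^*)=0$ with $w^*(0)=0$ and $\beta^*\ge 4$, contradicting the uniqueness of the trivial solution at $s=0$.

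\textbf{Main obstacle.} The sharpest steps are the strict monotonicity of $\beta$, where the linearized potential $g'(w)=e^w(1-2e^w)$ changes sign on $(-\infty,0]$ so a naive Sturmian comparison for $\psi=\partial_s w$ does not apply and one must combine the ODE analysis with the asymptotic identity for $\beta'(s)$; and the limit $s\to 0_-$, where the family $w(\cdot;s)$ converges pointwise to $0$ while the total mass $\int g(w)\,dx$ diverges, so the compactness must be extracted through the renormalized quantity $\beta(s)$ itself and reconciled with the $\beta>4$ lower bound.
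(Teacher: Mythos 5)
First, note that the paper itself gives no proof of Lemma \ref{propertyofentiresolution}: it is recalled verbatim from \cite[Theorem 2.1]{CFL}, \cite[Theorem 3.2]{CHMY} and \cite[Theorem 2.2]{SYa1}, so there is no in-paper argument to compare against. Your outline does follow the general strategy of those references (moving planes for the symmetry, shooting plus a Hamiltonian for global existence, rescaling to the Liouville equation for $s\to-\infty$), but Step 3 contains a genuine error, not merely a gap. The lemma asserts that $\beta$ is strictly \emph{increasing} on $(-\infty,0)$, which is already forced by the two limits you yourself derive ($\beta\to 4$ as $s\to-\infty$ and $\beta\to+\infty$ as $s\to 0_-$); yet you claim $\beta'(s)<0$ and support this by asserting $\psi=\partial_s w>0$ on $[0,\infty)$. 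Both claims are false: since $\psi(r;s)\sim-\beta'(s)\ln r$ with $\beta'(s)>0$, $\psi$ must become negative for large $r$, so it changes sign, and the actual monotonicity proof in \cite{CFL, CHMY} has to control the zeros of $\psi$ rather than exclude them. As written, your argument concludes the opposite of statement (ii) and is internally inconsistent with your own computation of the limiting values.

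Two smaller problems. In Step 2 the a priori bound $s\le w(r)\le 0$ is wrong: since $e^w(1-e^w)>0$ for $w<0$, the identity $(rw_r)'=-r\,e^w(1-e^w)$ gives $w_r<0$ for $r>0$, hence $w(r)\le s$ for all $r$ and in fact $w(r)\to-\infty$; global existence should instead be read off from the bound $\tfrac12 w_r^2\le F(s)-F(w)\le\tfrac12$ supplied by your own Hamiltonian, using $F(w)=-\tfrac12(e^w-1)^2\ge-\tfrac12$. In the limit $s\to 0_-$ your contradiction argument does not close: by continuous dependence $w(\cdot;s_n)\to 0$ locally uniformly, so any local limit $w^*$ is the trivial solution with zero mass, and no contradiction with $\beta^*\ge4$ arises unless you first rule out the mass escaping to infinity. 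The standard route is quantitative: $w(r;s)\approx s I_0(r)$ up to the radius $R_s\sim\ln(1/|s|)$ at which $|w|$ becomes of order one, and the mass accumulated on $B_{R_s}$ already grows like $R_s\to\infty$.
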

The following a priori estimates of the solutions to \eqref{main_eq} has been known.
\begin{lemma} \cite[Proposition 5.1]{HLTY} \label{lemma_RT} Let $({u}_{1,n},{u}_{2,n})$ be solutions of \eqref{main_eq} over $\mathbb{T}$. Then \[{u}_{1,n}(x)<0, \quad {u}_{2,n}(x)<0\ \ \ \ \textrm{for any}\ \  \ x\in\mathbb{T}.\]
\end{lemma}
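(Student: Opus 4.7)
The plan is to prove both bounds simultaneously by testing each equation in \eqref{main_eq} against its natural ``positive part'' weight and then recognizing a sum-of-squares structure in the nonlinear coupling. Two features make this argument clean: first, the test function $(e^{u_{i,n}}-1)^+$ vanishes in a neighborhood of every $p_j$ (because $u_{i,n}(p_j)=-\infty$), so the Dirac contributions in \eqref{main_eq} automatically drop out and integration by parts on $\mathbb{T}$ is harmless; second, on the overlap region $\{u_{1,n}>0\}\cap\{u_{2,n}>0\}$ the unfavorable cross terms assemble into a sum of weighted squares.

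Concretely, I would multiply the first equation in \eqref{main_eq} by $(e^{u_{1,n}}-1)^+$ and the second by $(e^{u_{2,n}}-1)^+$ and integrate over $\mathbb{T}$. The two left-hand sides collapse to
\[
-\int_{\{u_{1,n}>0\}}|\nabla u_{1,n}|^2 e^{u_{1,n}}\,dx \qquad\text{and}\qquad -\int_{\{u_{2,n}>0\}}|\nabla u_{2,n}|^2 e^{u_{2,n}}\,dx,
\]
both manifestly $\le 0$. Adding the two identities, I would split the resulting right-hand side over $C:=\{u_{1,n}>0,\,u_{2,n}>0\}$, $D_1:=\{u_{1,n}>0,\,u_{2,n}\le 0\}$, and $D_2:=\{u_{2,n}>0,\,u_{1,n}\le 0\}$. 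Because $0<\sigma_n<1$ (which follows at once from \eqref{assume1}--\eqref{assume2}), the integrand on $D_1$ factors as $(e^{u_{1,n}}-1)$ times $(e^{u_{1,n}}-1)(e^{u_{1,n}}+\sigma_n^2 e^{u_{2,n}})+\sigma_n(e^{u_{1,n}}+e^{u_{2,n}})(1-e^{u_{2,n}})$, a product of two non-negative quantities; the symmetric statement gives non-negativity on $D_2$.

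The real work is on the region $C$, where every cross factor has the ``wrong'' sign. The heart of the proof is the algebraic identity
\begin{align*}
&(e^{u_{1,n}}-1)^2(e^{u_{1,n}}+\sigma_n^2 e^{u_{2,n}}) + (e^{u_{2,n}}-1)^2(e^{u_{2,n}}+\sigma_n^2 e^{u_{1,n}}) \\
&\qquad - 2\sigma_n(e^{u_{1,n}}-1)(e^{u_{2,n}}-1)(e^{u_{1,n}}+e^{u_{2,n}}) \\
&\quad = e^{u_{1,n}}\bigl[(e^{u_{1,n}}-1)-\sigma_n(e^{u_{2,n}}-1)\bigr]^2 + e^{u_{2,n}}\bigl[(e^{u_{2,n}}-1)-\sigma_n(e^{u_{1,n}}-1)\bigr]^2,
\end{align*}
which displays the combined integrand on $C$ as a sum of weighted squares. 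Since $\sigma_n^2<1$, these two squares vanish simultaneously only when $u_{1,n}=u_{2,n}=0$, so the integrand is strictly positive throughout $C$. Thus the left-hand side of the added identity is $\le 0$ while the right-hand side is a sum of integrands strictly positive on $C\cup D_1\cup D_2$, forcing this set to have measure zero; hence $u_{1,n},u_{2,n}\le 0$ pointwise.

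To upgrade to the strict inequality of the statement, I would rewrite each equation in the form $\Delta u_{i,n}-a_i(x)u_{i,n}=g_i(x)$, where
\[
a_i(x):=\varepsilon_n^{-2}\,\tfrac{e^{u_{i,n}}-1}{u_{i,n}}\bigl(e^{u_{i,n}}+\sigma_n^2 e^{u_{j,n}}\bigr)\ge 0
\]
is bounded (using that $(e^t-1)/t$ is continuous and positive on $(-\infty,0]$), and $g_i(x):=\varepsilon_n^{-2}\sigma_n(e^{u_{1,n}}+e^{u_{2,n}})(1-e^{u_{j,n}})\ge 0$ because $u_{j,n}\le 0$. The strong maximum principle then prevents $u_{i,n}$ from attaining the value $0$ in the connected open set $\mathbb{T}\setminus Z$ unless $u_{i,n}\equiv 0$ there, which is ruled out by the logarithmic sink at each vortex point. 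The only genuinely non-routine ingredient in this plan is spotting the sum-of-squares decomposition on $C$, together with the companion observation that $(e^{u_{i,n}}-1)^+$ (rather than the more obvious $u_{i,n}^+$) is the right test function to make that decomposition appear; once these are secured, the remainder is bookkeeping.
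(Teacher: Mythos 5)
Your argument is correct in its main thrust, and it is genuinely different from what the paper does: the paper gives no proof of this lemma at all, importing it verbatim from \cite{HLTY} (Proposition 5.1), so any self-contained argument is an alternative route. Your computation checks out. The test function $(e^{u_{1,n}}-1)^+$ vanishes near each $p_j$, so the Dirac terms drop and integration by parts on the closed torus gives the nonpositive left-hand sides; the condition $0<\sigma_n<1$ does follow from \eqref{assume1}--\eqref{assume2}; the sign discussion on $D_1,D_2$ is right; and the sum-of-squares identity is verified directly: with $X=e^{u_{1,n}}-1$, $Y=e^{u_{2,n}}-1$ both sides equal $X^2(e^{u_{1,n}}+\sigma_n^2e^{u_{2,n}})+Y^2(e^{u_{2,n}}+\sigma_n^2e^{u_{1,n}})-2\sigma_nXY(e^{u_{1,n}}+e^{u_{2,n}})$, and since $\sigma_n^2\neq1$ the two squares vanish simultaneously only at $u_{1,n}=u_{2,n}=0$. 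This yields $u_{1,n}\le0$ and $u_{2,n}\le0$, and the passage from ``the bad set has measure zero'' to ``it is empty'' is justified by continuity. What your method buys is a purely integral, variational-flavored proof; the price is that it uses the standing assumption $\sigma_n\in(0,1)$ in an essential way (both in the $D_1,D_2$ signs and in the nondegeneracy of the quadratic form), whereas a pointwise maximum-principle argument at extremum points gives the same conclusion with less machinery.

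Two details in your strict-inequality step need patching. First, the parenthetical claim that $(e^{u_{i,n}}-1)^+$ vanishes near the $p_j$ ``because $u_{i,n}(p_j)=-\infty$'' is true only for $i=1$; for $i=2$ it is false but also irrelevant, since the second equation carries no Dirac masses and $u_{2,n}$ is smooth on all of $\mathbb{T}$. Second, and more substantively, for $i=2$ the strong maximum principle leaves the alternative $u_{2,n}\equiv0$, and this is \emph{not} ``ruled out by the logarithmic sink at each vortex point'': $u_{2,n}$ has no singularity. You must instead feed $u_{2,n}\equiv0$ back into the second equation of \eqref{main_eq}, which then reads $0=\Delta u_{2,n}=-\sigma_n\varepsilon_n^{-2}(e^{u_{1,n}}+1)(e^{u_{1,n}}-1)$, forcing (since $\sigma_n>0$) $u_{1,n}\equiv0$, which contradicts the logarithmic singularity of $u_{1,n}$ at the vortex points (here, as in the paper's standing setting, one needs $\mathfrak{M}\ge1$; otherwise $u_{1,n}\equiv u_{2,n}\equiv0$ is a solution and strict negativity fails). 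With that one line added, your proof is complete.
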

By the assumptions \eqref{assume1}, \eqref{assume2}, \eqref{assume3},  and the definitions of $\varepsilon_n,\ \sigma_n>0$ in \eqref{epsilon}, we note that
\begin{equation}\label{assume4}0<(\beta_n-\alpha_n)(\beta_n+\alpha_n)=\frac{\sigma_n}{\varepsilon_n^2}\le\mathfrak{N}.\end{equation}
Then we have the uniform $L^1(\mathbb{T})$ boundedness of  the main nonlinear terms in \eqref{main_eq}  with respect to $ n >0$.
\begin{lemma}\label{cor1}Let $({u}_{1,n},{u}_{2,n})$ satisfy  \eqref{main_eq} over $\mathbb{T}$. Then we have
\begin{equation}\label{int1} \int_{\mathbb{T}}\frac{1}{\varepsilon_n^2}e^{{u}_{1,n}}\left(1-e^{{u}_{1,n}}\right)dx=\int_{\mathbb{T}}\frac{1}{\varepsilon_n^2}e^{{u}_{1,n}}\left|1-e^{{u}_{1,n}}\right|dx\le 8\pi \mathfrak{M}, \quad \textrm{where}\ \ \mathfrak{M}=\sum_{i=1}^N m_{i},\end{equation}
and \begin{equation}\label{int2} \int_{\mathbb{T}}\frac{1}{\varepsilon_n^2}e^{{u}_{2,n}}\left(1-e^{{u}_{2,n}}\right)dx=\int_{\mathbb{T}}\frac{1}{\varepsilon_n^2}e^{{u}_{2,n}}\left|1-e^{{u}_{2,n}}\right|dx\le 2\mathfrak{N}|\mathbb{T}|.\end{equation}\end{lemma}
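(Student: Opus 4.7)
The plan is to integrate each of the two equations in \eqref{main_eq} over the flat torus $\mathbb{T}$ and use $\int_{\mathbb{T}}\Delta u_{i,n}\,dx=0$ by periodicity, turning each equation into an algebraic identity among integrals of the nonlinear terms. The two inputs I would combine are: the sign information $u_{1,n},u_{2,n}<0$ from Lemma \ref{lemma_RT}, which gives $e^{u_{i,n}}\in(0,1)$ and $1-e^{u_{i,n}}>0$ pointwise (this also supplies the equalities with the absolute-value integrals in \eqref{int1} and \eqref{int2} for free), and the weak-coupling estimate $\sigma_n/\varepsilon_n^2\le\mathfrak{N}$ from \eqref{assume4}, which controls the $\sigma_n$-cross terms.

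I would handle \eqref{int2} first. Integrating the second equation of \eqref{main_eq} and rearranging so that the three non-negative quantities $e^{u_{2,n}}(1-e^{u_{2,n}})$, $e^{u_{1,n}}(1-e^{u_{2,n}})$ and $(e^{u_{1,n}}+e^{u_{2,n}})(1-e^{u_{1,n}})$ all appear with positive coefficients yields
\[
\int_{\mathbb{T}}\frac{e^{u_{2,n}}(1-e^{u_{2,n}})}{\varepsilon_n^2}\,dx+\frac{\sigma_n^2}{\varepsilon_n^2}\int_{\mathbb{T}}e^{u_{1,n}}(1-e^{u_{2,n}})\,dx=\frac{\sigma_n}{\varepsilon_n^2}\int_{\mathbb{T}}(e^{u_{1,n}}+e^{u_{2,n}})(1-e^{u_{1,n}})\,dx.
\]
Dropping the non-negative second term on the left and using the pointwise bound $(e^{u_{1,n}}+e^{u_{2,n}})(1-e^{u_{1,n}})\le 2$ together with \eqref{assume4} on the right yields the claimed estimate $2\mathfrak{N}|\mathbb{T}|$.

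For \eqref{int1} I would apply the same scheme to the first equation. The only structural change is that the Dirac sources now contribute $8\pi\mathfrak{M}$ after integration, so rearranging leaves $\int_{\mathbb{T}}\frac{1}{\varepsilon_n^2}e^{u_{1,n}}(1-e^{u_{1,n}})\,dx$ on the left (together with a non-negative $\sigma_n^2$-term that can be dropped) balanced against $8\pi\mathfrak{M}$ plus a $\sigma_n$-coupling remainder of the form $\frac{\sigma_n}{\varepsilon_n^2}\int_{\mathbb{T}}(e^{u_{1,n}}+e^{u_{2,n}})(1-e^{u_{2,n}})\,dx$. The main (mild) obstacle is the bookkeeping of signs: one must split and transfer terms so that the targeted $L^1$ quantity stands alone with a positive coefficient on the left, and the remaining cross-coupling perturbations are routed to the right, where the pointwise bound $(e^{u_{1,n}}+e^{u_{2,n}})(1-e^{u_{2,n}})\le 2$ combined with $\sigma_n/\varepsilon_n^2\le\mathfrak{N}$ absorbs them uniformly in $n$, yielding the uniform $L^1$ bound governed by $8\pi\mathfrak{M}$.
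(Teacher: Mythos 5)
Your overall route is exactly the paper's: integrate both equations of \eqref{main_eq} over $\mathbb{T}$, use $u_{1,n},u_{2,n}<0$ from Lemma \ref{lemma_RT} (which gives the absolute-value equalities for free), and control the coupling terms with $\sigma_n/\varepsilon_n^2\le\mathfrak{N}$ from \eqref{assume4}; this is precisely the identity \eqref{main_eq_int} in the paper. Your treatment of \eqref{int2} is complete and correct: dropping the nonnegative $\sigma_n^2$-term and using $(e^{u_{1,n}}+e^{u_{2,n}})(1-e^{u_{1,n}})\le 2$ gives the bound $2\mathfrak{N}|\mathbb{T}|$.

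The gap is in your argument for \eqref{int1}. After integrating the first equation, the coupling term $\frac{\sigma_n}{\varepsilon_n^2}\int_{\mathbb{T}}(e^{u_{1,n}}+e^{u_{2,n}})(1-e^{u_{2,n}})\,dx$ sits on the same side as $8\pi\mathfrak{M}$ and is nonnegative, so no ``routing'' of signs makes it disappear: what your manipulations actually yield is $\int_{\mathbb{T}}\frac{1}{\varepsilon_n^2}e^{u_{1,n}}(1-e^{u_{1,n}})\,dx\le 8\pi\mathfrak{M}+2\mathfrak{N}|\mathbb{T}|$, not the constant $8\pi\mathfrak{M}$ asserted in \eqref{int1}. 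Moreover, the sharp constant cannot be recovered by any bookkeeping of this kind: setting $A=\int_{\mathbb{T}}\frac{1}{\varepsilon_n^2}e^{u_{1,n}}(1-e^{u_{1,n}})dx$, $B=\int_{\mathbb{T}}\frac{1}{\varepsilon_n^2}e^{u_{2,n}}(1-e^{u_{2,n}})dx$, $C=\int_{\mathbb{T}}\frac{1}{\varepsilon_n^2}e^{u_{2,n}}(1-e^{u_{1,n}})dx$, $D=\int_{\mathbb{T}}\frac{1}{\varepsilon_n^2}e^{u_{1,n}}(1-e^{u_{2,n}})dx$, the two integrated identities read $A+\sigma_n^2C-\sigma_n(B+D)=8\pi\mathfrak{M}$ and $B+\sigma_n^2D-\sigma_n(A+C)=0$, and eliminating $B$ gives
\begin{equation*}
(1-\sigma_n^2)\bigl(A-\sigma_n D\bigr)=8\pi\mathfrak{M},\qquad\text{i.e.}\qquad A=\sigma_n D+\frac{8\pi\mathfrak{M}}{1-\sigma_n^2}\ \ge\ 8\pi\mathfrak{M},
\end{equation*}
with strict inequality when $\sigma_n>0$. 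So your closing claim that the cross-coupling is absorbed ``yielding the uniform $L^1$ bound governed by $8\pi\mathfrak{M}$'' overstates what the computation delivers; you should instead state and prove the uniform bound $8\pi\mathfrak{M}+2\mathfrak{N}|\mathbb{T}|$ (uniformity in $n$ is all that the subsequent blow-up analysis uses), or explicitly record the exact identity above. To be fair, the paper's own proof is the same two-line computation and is silent on exactly this point, so the discrepancy in the constant is inherited from the statement rather than introduced by your approach.
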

\begin{proof}By integrating \eqref{main_eq} over $\mathbb{T}$, we can obtain  \begin{equation}\label{main_eq_int}
\left\{\begin{array}{l}
\int_{\mathbb{T}}\frac{1}{\varepsilon_n^2} \left\{e^{{u}_{1,n}}(1-e^{{u}_{1,n}})+\sigma_n^2 e^{{u}_{2,n}}(1-e^{{u}_{1,n}})\right\}dx= \int_{\mathbb{T}}\frac{\sigma_n}{\varepsilon_n^2} (e^{{u}_{1,n}}+e^{{u}_{2,n}})(1-e^{{u}_{2,n}})dx +8\pi\mathfrak{M},\\ \\
\int_{\mathbb{T}}\frac{1}{\varepsilon_n^2}\left\{e^{{u}_{2,n}}(1-e^{{u}_{2,n}})+\sigma_n^2 e^{{u}_{1,n}}(1-e^{{u}_{2,n}})\right\}dx=\int_{\mathbb{T}}\frac{\sigma_n}{\varepsilon_n^2} (e^{{u}_{1,n}}+e^{{u}_{2,n}})(1-e^{{u}_{1,n}})dx.
\end{array}
\right.
\end{equation}
By using Lemma \ref{lemma_RT} and the assumption \eqref{assume4}, we get the estimations \eqref{int1} and \eqref{int2}.
\end{proof}

 We recall the following Harnack inequality type result.
\begin{lemma}(\cite{BT,GT})\label{harnarkineq}
Let $D\subseteq\mathbb{R}^2$ be a smooth bounded domain and $v$ satisfy:
$$-\Delta v=g\ \textrm{in}\ D,$$
with $g\in L^p(D)$, $p>1$. For any subdomain $D'\subset\subset D$, there exist two positive constants $\sigma\in(0,1)$ and $\tau>0$,
depending on $D'$ only such that:

$$(i)\ \textrm{if}\ \sup_{\partial D} v\le C,\ \textrm{then}\ \sup_{D'} v\le\sigma\inf_{D'}v+(1+\sigma)\tau\|g\|_{L^p}+(1-\sigma)C,$$

$$(ii)\ \textrm{if}\ \inf_{\partial D} v\ge -C,\ \textrm{then}\ \sigma\sup_{D'} v\le\inf_{D'}v+(1+\sigma)\tau\|g\|_{L^p}+(1-\sigma)C.$$
\end{lemma}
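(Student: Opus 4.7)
The plan is to reduce the statement to the classical Harnack inequality for nonnegative harmonic functions plus a standard $W^{2,p}$-type sup bound. Specifically, I would split $v = v_1 + v_2$, where $v_1$ carries the inhomogeneous part and $v_2$ is harmonic with the boundary data of $v$:
\begin{equation*}
-\Delta v_1 = g \text{ in } D, \quad v_1 = 0 \text{ on } \partial D; \qquad \Delta v_2 = 0 \text{ in } D, \quad v_2 = v \text{ on } \partial D.
\end{equation*}
For $v_1$, $W^{2,p}$ elliptic regularity with zero boundary values on the smooth bounded domain $D$ yields $\|v_1\|_{L^\infty(D)} \le \tau \|g\|_{L^p(D)}$ for some $\tau = \tau(D, p) > 0$; since $p > 1$ on the two-dimensional domain $D$, the Sobolev embedding $W^{2,p} \hookrightarrow L^\infty$ makes this legitimate (one could alternatively convolve $g$ against the Green's function and use $|G(x,\cdot)| \in L^{p'}$).

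For $v_2$, I would invoke the classical Harnack inequality for positive harmonic functions on the interior compact set $D' \subset\subset D$: there exists $c_H = c_H(D', D) \ge 1$ such that for any harmonic function $h \ge 0$ in $D$ one has $\sup_{D'} h \le c_H \inf_{D'} h$. In case (i), the hypothesis $\sup_{\partial D} v \le C$ gives $\sup_{\partial D} v_2 \le C$, so by the maximum principle $C - v_2 \ge 0$ in $D$ and it is harmonic. Applying Harnack to $C - v_2$ and rearranging, with $\sigma := c_H^{-1} \in (0,1)$, I obtain
\begin{equation*}
\sup_{D'} v_2 \le \sigma \inf_{D'} v_2 + (1-\sigma) C.
\end{equation*}
Combining with the $L^\infty$ bound on $v_1$ gives
\begin{equation*}
\sup_{D'} v \le \sup_{D'} v_1 + \sup_{D'} v_2 \le \tau \|g\|_{L^p} + \sigma \inf_{D'} v_2 + (1-\sigma) C,
\end{equation*}
and then replacing $\inf_{D'} v_2$ by $\inf_{D'} v + \tau \|g\|_{L^p}$ (since $v_2 = v - v_1 \le v + \tau\|g\|_{L^p}$ pointwise) produces exactly
\begin{equation*}
\sup_{D'} v \le \sigma \inf_{D'} v + (1+\sigma)\tau \|g\|_{L^p} + (1-\sigma) C,
\end{equation*}
which is (i).

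Part (ii) follows by applying (i) to $\tilde v := -v$, which satisfies $-\Delta \tilde v = -g$ with $\|{-g}\|_{L^p} = \|g\|_{L^p}$ and $\sup_{\partial D} \tilde v \le C$. The resulting inequality $\sup_{D'} \tilde v \le \sigma \inf_{D'} \tilde v + (1+\sigma)\tau \|g\|_{L^p} + (1-\sigma)C$ translates, via $\sup(-v) = -\inf v$ and $\inf(-v) = -\sup v$, into $\sigma \sup_{D'} v \le \inf_{D'} v + (1+\sigma)\tau \|g\|_{L^p} + (1-\sigma)C$, which is (ii). The only delicate point is keeping the constants $\sigma$ and $\tau$ dependent only on $D'$ (and $D$, $p$) and not on $v$ or $g$; this is automatic in the decomposition above, since the Harnack constant for harmonic functions depends only on the geometry of $D' \subset\subset D$, and the $W^{2,p}$ constant depends only on $D$ and $p$. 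I do not anticipate a true obstacle: the main care is to make sure the split is valid (the smoothness of $\partial D$ guarantees solvability of the zero-boundary Poisson problem in $W^{2,p}$) and to carry the $\tau\|g\|_{L^p}$ term through both applications of the maximum principle and the Harnack step without losing the factor $(1+\sigma)$.
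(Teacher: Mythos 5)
The paper does not prove this lemma; it is quoted verbatim from the cited references (Bartolucci--Tarantello and Gilbarg--Trudinger), so there is no in-paper argument to compare against. Your proof is correct and is essentially the standard one from those sources: the decomposition $v=v_1+v_2$ with $\|v_1\|_{L^\infty}\le\tau\|g\|_{L^p}$ (valid since $p>1$ gives $W^{2,p}\hookrightarrow L^\infty$ in dimension two), the classical Harnack inequality applied to the nonnegative harmonic function $C-v_2$, and the deduction of (ii) from (i) via $v\mapsto -v$ all go through, and your bookkeeping correctly produces the factor $(1+\sigma)\tau\|g\|_{L^p}$ from the two places where $v_1$ is absorbed. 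The only cosmetic caveat is that solving the Dirichlet problem with boundary data $v|_{\partial D}$ presumes $v$ is continuous up to $\partial D$ (or that one shrinks $D$ slightly while keeping $D'\subset\subset D$), which is how the lemma is used in practice.
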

The linear operator $L_n$ is defined by
\begin{equation}\label{eq10}
L_n(S):=\Delta S-\frac{1}{\varepsilon_n^2} S.
\end{equation}In order to improve the asymptotic behavior of $u_{2,n}$ for \eqref{eq0}, the following result will be useful.
\begin{theorem}\cite[Theorem 2.4 ]{FLL}\label{theoremb}
The operator
\begin{equation*}
L_{n}: W^{2,2}(\mathbb{T})\to L^2(\mathbb{T})
\end{equation*}
is an isomorphism. Moreover, for any $S\in W^{2,2}(\mathbb{T})$ and $g\in L^2(\mathbb{T})$ satisfying $L_{n}(S)=g$, there exists a positive constant $C>0$, independent of ${\varepsilon_n}>0$, such that
\begin{equation}\label{eq19} \left\{\begin{array}{l}
 \|S\|_{L^\infty(\mathbb{T})} \leq C  {\varepsilon_n}\|g\|_{L^2(\mathbb{T})},
\\ \|S\|_{L^\infty(\mathbb{T})}\leq C  {\varepsilon_n^2} \|g\|_{L^\infty(\mathbb{T})}\ \ \textrm{if}\ \ g\in L^\infty(\mathbb{T}).
\end{array}
\right.
\end{equation}
\end{theorem}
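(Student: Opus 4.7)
The plan is to prove the three claims in sequence: the isomorphism, then each of the two bounds in \eqref{eq19}. Throughout, what makes the argument go through is the positivity of the zeroth-order coefficient $\varepsilon_n^{-2}$ in $L_n$; the only delicate point is the sharp $\varepsilon_n$-scaling in the $L^{\infty}$-from-$L^2$ estimate.

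First, I would handle the isomorphism by Fourier series on $\mathbb{T}$. Writing $S=\sum_k \hat{S}_k e_k$ against the orthonormal eigenbasis of $-\Delta$ with eigenvalues $\lambda_k\ge 0$, the equation $L_n S=g$ becomes $-(\lambda_k+\varepsilon_n^{-2})\hat{S}_k=\hat{g}_k$. Since $\lambda_k+\varepsilon_n^{-2}\ge\varepsilon_n^{-2}>0$, the inversion $\hat{S}_k=-\hat{g}_k/(\lambda_k+\varepsilon_n^{-2})$ is immediate, and because $\lambda_k|\hat{S}_k|\le|\hat{g}_k|$ the resulting $S$ lies in $W^{2,2}(\mathbb{T})$, yielding a bijection. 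Equivalently, testing $L_n S=g$ against $S$ on the torus gives $-\int_{\mathbb{T}}|\nabla S|^2-\varepsilon_n^{-2}\int_{\mathbb{T}}S^2=\int_{\mathbb{T}}gS$, from which the kernel is trivial and Fredholm theory yields surjectivity.

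Next, the $L^\infty$-to-$L^\infty$ estimate is a direct comparison argument. At an interior maximum $x_0$ of $S$ on $\mathbb{T}$ one has $\Delta S(x_0)\le 0$; rearranging the equation gives $S(x_0)=\varepsilon_n^{2}(\Delta S(x_0)-g(x_0))\le\varepsilon_n^{2}\|g\|_{L^\infty}$, and the analogous inequality at the minimum yields $S(x_0)\ge-\varepsilon_n^{2}\|g\|_{L^\infty}$. This produces the sharp scaling $\varepsilon_n^{2}$ with no further work.

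The $L^\infty$-from-$L^2$ estimate is where the real work lies, and I expect it to be the main obstacle. I would use the Green's function $G_n(x,y)$ of $-\Delta+\varepsilon_n^{-2}$ on $\mathbb{T}$, which exists precisely because the positive zeroth-order term removes the usual compatibility condition. The representation $S(x)=-\int_{\mathbb{T}}G_n(x,y)g(y)\,dy$ combined with Cauchy--Schwarz gives $|S(x)|\le\|G_n(x,\cdot)\|_{L^2(\mathbb{T})}\|g\|_{L^2(\mathbb{T})}$, so the whole theorem reduces to the uniform bound $\sup_x\|G_n(x,\cdot)\|_{L^2(\mathbb{T})}\le C\varepsilon_n$. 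I would establish this either by comparison with the fundamental solution $(2\pi)^{-1}K_0(|x-y|/\varepsilon_n)$ on $\mathbb{R}^2$ (the rescaling $\rho=r/\varepsilon_n$ gives $\int_{\mathbb{R}^2}K_0(|x|/\varepsilon_n)^{2}\,dx=2\pi\varepsilon_n^{2}\int_0^\infty K_0(\rho)^{2}\rho\,d\rho$, with the last integral finite thanks to the logarithmic singularity at $0$ and exponential decay at infinity), or spectrally via $\sum_k(\lambda_k+\varepsilon_n^{-2})^{-2}\sim\int_{\mathbb{R}^2}(|k|^2+\varepsilon_n^{-2})^{-2}\,dk=\pi\varepsilon_n^{2}$. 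Because $K_0(r/\varepsilon_n)$ decays exponentially on the scale $\varepsilon_n\ll|\mathbb{T}|^{1/2}$, the periodization correction on $\mathbb{T}$ is negligible, and the $O(\varepsilon_n)$ scaling transfers from $\mathbb{R}^2$ to $\mathbb{T}$; any loss here would weaken the factor $\varepsilon_n$ in \eqref{eq19} to $\varepsilon_n^{1-\delta}$ and propagate into the later blow-up analysis, which is precisely why this Bessel/spectral computation must be carried out sharply.
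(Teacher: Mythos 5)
This statement is imported: the paper gives no proof of Theorem \ref{theoremb}, it simply quotes \cite[Theorem 2.4]{FLL}, so there is no internal argument to compare against; what you have written is a self-contained proof, and it is correct. Your three steps are sound: the Fourier-series inversion (or the energy identity plus Fredholm theory) does give the isomorphism; the Green's-function representation with the bound $\sup_x\|G_n(x,\cdot)\|_{L^2(\mathbb{T})}\le C\varepsilon_n$ — which by translation invariance on the flat torus is independent of $x$, and which your spectral computation $\sum_k(\lambda_k+\varepsilon_n^{-2})^{-2}=O(\varepsilon_n^2)$ or the $K_0$-periodization both deliver with the sharp power — yields $\|S\|_{L^\infty}\le C\varepsilon_n\|g\|_{L^2}$; and the comparison at the extremum gives the $\varepsilon_n^2$ bound. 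This is essentially the standard route taken for this lemma in the Chern--Simons literature (Green's function of $-\Delta+\varepsilon^{-2}$ with exponential decay on scale $\varepsilon$, plus the maximum principle). One small point to tighten: since $S$ is only in $W^{2,p}$ (not $C^2$), the pointwise evaluation $\Delta S(x_0)\le 0$ at the maximum needs a justification; the cleanest fix is to test the equation with $\varphi=(S-\varepsilon_n^2\|g\|_{L^\infty})^+$, which gives $\int_{\mathbb{T}}|\nabla\varphi|^2+\int_{\mathbb{T}}(\varepsilon_n^{-2}S+g)\varphi=0$ with the integrand nonnegative on $\{\varphi>0\}$, forcing $S\le\varepsilon_n^2\|g\|_{L^\infty}$ a.e., and likewise from below; alternatively invoke the maximum principle for strong $W^{2,p}$ solutions. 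With that remark your argument is complete and gives the constants uniformly in $\varepsilon_n$, which is what the later blow-up analysis needs.
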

We set $u_{1,n}=v_{1,n}+u_0$, and assume $|\mathbb{T}|=1$. Then (\ref{main_eq}) is equivalent to
\begin{equation}\label{eq2}
\left\{\begin{array}{l}
\Delta {v}_{1,n}=\frac{1}{\varepsilon_n^2} \left\{e^{{v}_{1,n}+u_0}(e^{{v}_{1,n}+u_0}-1)+\sigma_n^2 e^{{u}_{2,n}}(e^{{v}_{1,n}+u_0}-1)-\sigma_n (e^{{v}_{1,n}+u_0}+e^{{u}_{2,n}})(e^{{u}_{2,n}}-1)\right\}\\\quad\quad\ +8\pi\mathfrak{M},\\ \\
\Delta {u}_{2,n}=\frac{1}{\varepsilon_n^2}\left\{e^{{u}_{2,n}}(e^{{u}_{2,n}}-1)+\sigma_n^2 e^{{v}_{1,n}+u_0}(e^{{u}_{2,n}}-1)-\sigma_n (e^{{v}_{1,n}+u_0}+e^{{u}_{2,n}})(e^{{v}_{1,n}+u_0}-1)\right\},
\end{array}
\right.
\end{equation}

\begin{lemma}\label{grad_sol} Let $({u}_{1,n},{u}_{2,n})$ be a sequence of solutions for  \eqref{main_eq} over $\mathbb{T}$. Then there exists a constant $C>0$, independent of ${n}\ge1$, such that
\[\left\|\nabla v_{1,n} \right\|_{L^\infty(\mathbb{T})}+\left\|\nabla {u}_{2,n} \right\|_{L^\infty(\mathbb{T})}\le \frac{C}{\varepsilon_n}.\]
\end{lemma}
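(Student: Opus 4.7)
The plan is to bound $\Delta v_{1,n}$ and $\Delta u_{2,n}$ simultaneously in $L^\infty(\mathbb{T})$ by $C/\varepsilon_n^2$ and in $L^1(\mathbb{T})$ by a constant, and then to exploit the Green's representation on the torus together with a cut-off at the length scale $r=\varepsilon_n$.

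First I would verify the pointwise bound: by Lemma \ref{lemma_RT} one has $u_{1,n}, u_{2,n} < 0$, so every factor $e^{u_{i,n}}$ lies in $(0,1)$ and $|e^{u_{i,n}} - 1| < 1$. Since $\sigma_n \to 0$ by \eqref{assume3}, each nonlinear term on the right-hand side of \eqref{eq2} is pointwise bounded by $C/\varepsilon_n^2$; the additive constant $8\pi\mathfrak{M}$ is of lower order. Hence $\|\Delta v_{1,n}\|_{L^\infty(\mathbb{T})} + \|\Delta u_{2,n}\|_{L^\infty(\mathbb{T})} \le C/\varepsilon_n^2$. For the $L^1$ bound, the diagonal terms $\frac{1}{\varepsilon_n^2} e^{u_{i,n}}(1-e^{u_{i,n}})$ are controlled by Lemma \ref{cor1}, and the cross terms are handled via \eqref{assume4}: $\frac{\sigma_n}{\varepsilon_n^2}\le\mathfrak{N}$ and $\frac{\sigma_n^2}{\varepsilon_n^2}\le\mathfrak{N}\sigma_n$, each multiplied by bounded exponentials and integrated over the compact set $\mathbb{T}$. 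This yields $\|\Delta v_{1,n}\|_{L^1(\mathbb{T})} + \|\Delta u_{2,n}\|_{L^1(\mathbb{T})} \le C$.

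Finally, since $\int_{\mathbb{T}}\Delta v_{1,n}=\int_{\mathbb{T}}\Delta u_{2,n}=0$ on the closed surface, Green's representation yields
\[ \nabla v_{1,n}(x) = -\int_{\mathbb{T}} \nabla_x G(x,y)\,\Delta v_{1,n}(y)\,dy, \]
and similarly for $u_{2,n}$. Writing $G(x,y) = -\frac{1}{2\pi}\ln|x-y| + \gamma(x,y)$ with $\gamma$ smooth on $\mathbb{T}\times\mathbb{T}$ gives $|\nabla_x G(x,y)| \le C/|x-y|$, and splitting the integral at radius $r$ (below the injectivity radius) produces
\[ |\nabla v_{1,n}(x)| \le \frac{C}{\varepsilon_n^2} \int_{B_r(x)} \frac{dy}{|x-y|} + \frac{C}{r}\,\|\Delta v_{1,n}\|_{L^1(\mathbb{T})} \le C\left(\frac{r}{\varepsilon_n^2} + \frac{1}{r}\right). \]
Choosing $r=\varepsilon_n$ gives $|\nabla v_{1,n}(x)| \le C/\varepsilon_n$, and identical reasoning handles $\nabla u_{2,n}$.

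The only real subtlety is arranging the twin $L^\infty$ and $L^1$ control on the right-hand sides: it is precisely the weak-coupling scaling $\sigma_n/\varepsilon_n^2=O(1)$ built into \eqref{assume2} that keeps the cross terms integrable despite their pointwise size $O(1/\varepsilon_n^2)$. Once this dual control is in place, the gradient estimate is a routine interpolation of the logarithmic Green kernel against $L^\infty$ and $L^1$.
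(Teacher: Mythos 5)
Your proposal is correct and follows essentially the same route as the paper: Green's representation on the torus, pointwise control of the cross terms via $\sigma_n/\varepsilon_n^2\le\mathfrak{N}$ and $u_{i,n}<0$, and a splitting of the kernel integral at radius $\varepsilon_n$ using the $L^\infty$ bound $O(\varepsilon_n^{-2})$ near the singularity and the $L^1$ bound from Lemma \ref{cor1} on the complement. The only cosmetic difference is that the paper bounds the nonlinear terms directly inside the representation formula rather than phrasing it as dual $L^1$/$L^\infty$ control of the full Laplacian, which changes nothing substantive.
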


\begin{proof}By  the Green's representation formula, we have that
\[v_{1,n}(x)-\int_\mathbb{T} v_{1,n}dy=\int_\mathbb{T} \frac{1}{\varepsilon_n^2} \left\{e^{{u}_{1,n}}(1-e^{{u}_{1,n}})+\sigma_n^2 e^{{u}_{2,n}}(1-e^{{u}_{1,n}})-\sigma_n (e^{{u}_{1,n}}+e^{{u}_{2,n}})(1-e^{{u}_{2,n}})\right\}G(x,y)dy,
\]and
\[u_{2,n}(x)-\int_\mathbb{T} u_{2,n}dy=\int_\mathbb{T} \frac{1}{\varepsilon_n^2} \left\{e^{{u}_{2,n}}(1-e^{{u}_{2,n}})+\sigma_n^2 e^{{u}_{1,n}}(1-e^{{u}_{2,n}})-\sigma_n (e^{{u}_{1,n}}+e^{{u}_{2,n}})(1-e^{{u}_{1,n}})\right\}G(x,y)dy.
\]
In view of  Lemma \ref{lemma_RT},  Lemma \ref{cor1}, and \eqref{assume4}, we see that there are constants $c_0, c_1, C>0$, independent of $n\ge1$, satisfying
\begin{equation}\begin{aligned}
&\left|\nabla_x {v}_{1,n} (x)\right|+\left|\nabla_x {u}_{2,n} (x)\right|
\\&\le\sum_{i=1}^2\left|\int_{B_d(x)}\frac{1}{\varepsilon_n^2}e^{{u}_{i,n}(y)}\left(1-e^{{u}_{i,n}}\right)\left(-\frac{x-y}{2\pi|x-y|^2}+\nabla \gamma(x,y)\right)dy\right|+c_0+4\mathfrak{N}+2\mathfrak{N}^2\varepsilon_n^2\\
&\leq \sum_{i=1}^2\left[\frac{   \left\|e^{{u}_{i,n}}\left(1-e^{{u}_{i,n}}\right)\right\|_{L^\infty(\mathbb{T})}}{2\pi \varepsilon_n^2}\left(\int_{|x-y|\leq\varepsilon_n } \frac{1}{|x-y|}dy\right)
+\int_{\varepsilon_n \le|x-y|\leq d}\frac{ e^{{u}_{i,n}(y)}\left|1-e^{{u}_{i,n}}\right|}{2\pi\varepsilon_n^2|x-y|}dy\right] +c_1
 \le  \frac{C}{\varepsilon_n}.
\end{aligned}\end{equation}

\end{proof}
\section{The asymptotic behavior of solutions}

%-------------------------------------------------------------------------

We first obtain the following result by   applying the arguments in  \cite[Lemma 4.1]{CK}.
\begin{lemma}\label{uniformestimates}
Let  $({u}_{1,n}, {u}_{2,n})$ be
  a sequence of solutions for \eqref{main_eq}.

(i) if $\lim_{ n \to\infty }\Big(\inf_\mathbb{T}|{u}_{1,n}|\Big)=0,$ then we have \begin{equation}\label{K_1}\lim_{{ n \to\infty}}\|{u}_{1,n}\|_{L^\infty(K)}=0\ \ \ \textrm{for any compact set }\ \ K\subset\mathbb{T}\setminus Z.\end{equation}

(ii) if $\lim_{ n \to\infty }\Big(\inf_\mathbb{T}|{u}_{2,n}|\Big)=0,$ then there is a constant $c_0>0$, independent of $n\ge1$, satisfying  \begin{equation}\label{K_2} \|{u}_{2,n}\|_{L^\infty(\mathbb{T})}\le c_0 \varepsilon_n^2.\end{equation}
\end{lemma}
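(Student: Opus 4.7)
The plan is to treat (i) and (ii) separately, with (ii) being the delicate half. For (i) I adapt the Brezis--Merle/Harnack argument of Choe--Kim cited in the statement. By Lemma \ref{lemma_RT}, $u_{1,n}<0$, so the hypothesis $\inf_{\mathbb{T}}|u_{1,n}|\to 0$ is equivalent to $\sup_{\mathbb{T}} u_{1,n}\to 0$; on a compact $K\subset\mathbb{T}\setminus Z$ the Dirac sources are absent and Lemma \ref{cor1} supplies a uniform $L^1$ bound on $\Delta u_{1,n}$. I would enlarge $K\subset K'\subset\subset\mathbb{T}\setminus Z$, apply Lemma \ref{harnarkineq} to $-u_{1,n}\ge 0$ on sub-covers of $K'$, and use a connectedness/covering argument starting from a sequence $x_n$ with $u_{1,n}(x_n)\to 0$ to propagate smallness across $K$, obtaining $\|u_{1,n}\|_{L^\infty(K)}\to 0$.

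For (ii), the proof divides into two steps. In Step~1 I show $\|u_{2,n}\|_{L^\infty(\mathbb{T})}\to 0$. Set $M_n:=\min_{\mathbb{T}} u_{2,n}=u_{2,n}(z_n)\le 0$. Using $\Delta u_{2,n}(z_n)\ge 0$ in the second equation of \eqref{main_eq} and \eqref{assume4} together with $e^{u_{1,n}},e^{u_{2,n}}\in(0,1]$, I obtain
\[
e^{M_n}(1-e^{M_n})\le (e^{M_n}+\sigma_n^2 e^{u_{1,n}(z_n)})(1-e^{M_n})\le \sigma_n(e^{u_{1,n}(z_n)}+e^{M_n})(1-e^{u_{1,n}(z_n)})\le 2\sigma_n\le 2\mathfrak{N}\varepsilon_n^2,
\]
which forces either $M_n\to 0$ or $M_n\to -\infty$. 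Ruling out $M_n\to -\infty$ under the hypothesis is the heart of the matter: by the intermediate value theorem, pick $w_n\in\mathbb{T}$ with $u_{2,n}(w_n)=-1$; a rescaling $\tilde u_n(y):=u_{2,n}(w_n+\varepsilon_n y)$, combined with the $C^1$ bound of Lemma \ref{grad_sol} and $\sigma_n\to 0$, extracts in $C^2_{\mathrm{loc}}(\mathbb{R}^2)$ a non-trivial entire limit $\tilde u^*$ solving $\Delta \tilde u^*+e^{\tilde u^*}(1-e^{\tilde u^*})=0$ with $\tilde u^*(0)=-1$, $\tilde u^*\le 0$, and $e^{\tilde u^*}(1-e^{\tilde u^*})\in L^1(\mathbb{R}^2)$ (Fatou plus Lemma \ref{cor1}). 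Lemmas \ref{lemma2.1} and \ref{propertyofentiresolution} then yield the sharp mass bound $\int_{\mathbb{R}^2}e^{\tilde u^*}(1-e^{\tilde u^*})>8\pi$. The hypothesis supplies a second sequence $y_n$ with $u_{2,n}(y_n)\to 0$, and rescaling at $y_n$ yields the trivial limit profile, so the bubble at $w_n$ and the near-zero region at $y_n$ live on well-separated $\varepsilon_n$-scales; comparing the bubble mass with the global $L^1$ estimate of Lemma \ref{cor1} produces the desired contradiction, so $M_n\to 0$.

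Step~2 is a clean bootstrap via Theorem \ref{theoremb}. Set $g_n:=L_n(u_{2,n})=\Delta u_{2,n}-u_{2,n}/\varepsilon_n^2$, so
\[
g_n=\frac{1}{\varepsilon_n^2}\bigl\{[e^{u_{2,n}}(e^{u_{2,n}}-1)-u_{2,n}]+\sigma_n^2 e^{u_{1,n}}(e^{u_{2,n}}-1)-\sigma_n(e^{u_{1,n}}+e^{u_{2,n}})(e^{u_{1,n}}-1)\bigr\}.
\]
The Taylor expansion $e^{u_{2,n}}(e^{u_{2,n}}-1)-u_{2,n}=O(u_{2,n}^2)$ (valid uniformly by Step~1), together with $\sigma_n^2/\varepsilon_n^2=O(\varepsilon_n^2)$ and $\sigma_n/\varepsilon_n^2\le \mathfrak{N}$, gives the pointwise bound $|g_n|\le C\|u_{2,n}\|_{L^\infty(\mathbb{T})}^2/\varepsilon_n^2+C$. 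The second estimate in \eqref{eq19} then yields
\[
\|u_{2,n}\|_{L^\infty(\mathbb{T})}\le C\varepsilon_n^2\|g_n\|_{L^\infty(\mathbb{T})}\le C\|u_{2,n}\|_{L^\infty(\mathbb{T})}^2+C\varepsilon_n^2,
\]
and since $\|u_{2,n}\|_{L^\infty(\mathbb{T})}\to 0$ from Step~1, the quadratic term absorbs into the left-hand side for $n$ large, delivering $\|u_{2,n}\|_{L^\infty(\mathbb{T})}\le c_0\varepsilon_n^2$.

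The hard part will be Step~1 of (ii). The naive bubble mass comparison only bounds the number of concentration points by $\mathfrak{N}/(4\pi)$, which is not, by itself, a contradiction when $\mathfrak{N}$ is large; the hypothesis must be used to anchor a near-zero point of $u_{2,n}$ and to show, via a careful placement of rescalings and the strict sharpness of the mass lower bound of Lemma \ref{propertyofentiresolution}, that such anchoring is incompatible with $M_n\to -\infty$. Once Step~1 is in place, Step~2 is precisely the linearization application that motivates the authors' inclusion of Theorem \ref{theoremb} as a preliminary.
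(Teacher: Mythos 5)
Your Step~2 of (ii) is exactly the paper's argument (Taylor expansion of $e^{u_{2,n}}(e^{u_{2,n}}-1)-u_{2,n}$ plus Theorem \ref{theoremb}, then absorption of the quadratic term using $\|u_{2,n}\|_{L^\infty}\to 0$ and $\sigma_n\le \mathfrak{N}\varepsilon_n^2$), and your minimum-point dichotomy $M_n\to 0$ or $M_n\to-\infty$ is a correct side observation. But the two places you yourself flag as delicate are genuine gaps, and the single missing idea is the same in both: the paper never compares a fixed bubble mass (the bound $>8\pi$) against the global $L^1$ estimate. Instead it exploits Lemma \ref{propertyofentiresolution}(ii), i.e.\ $\beta(s)\to+\infty$ as $s\to 0^-$: since the hypothesis provides points where the component is arbitrarily close to $0$, one fixes a level $s_1<0$ so close to $0$ that $2\pi\beta(s_1)$ exceeds the a priori total mass ($8\pi\mathfrak{M}$ from \eqref{int1} for $u_{1,n}$, $2\mathfrak{N}|\mathbb{T}|$ from \eqref{int2} for $u_{2,n}$), finds by the intermediate value theorem on a connected set joining a near-zero point to a point where the solution is $\le -c_K$ a point $y_n$ where the solution equals $s_1$, rescales by $\varepsilon_n$ at $y_n$ (Lemma \ref{grad_sol} gives compactness), and obtains an entire solution $u_*$ of $\Delta u_*+e^{u_*}(1-e^{u_*})=0$ with $u_*(0)=s_1$; radial symmetry and monotonicity of $\beta$ force its mass to be at least $2\pi\beta(s_1)$, contradicting the total mass bound. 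Your anchoring at the level $-1$ only yields the lower bound $8\pi$, which, as you admit, is no contradiction when $\mathfrak{N}$ is large; ``careful placement of rescalings'' does not substitute for this choice of $s_1$, so Step~1 of (ii) is not proved.

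Part (i) suffers from the same omission and from two further problems with the route you sketch. The Harnack propagation via Lemma \ref{harnarkineq} at the original scale cannot work: the error term there is $\tau\|g\|_{L^p}$ with $p>1$ and $g=\Delta u_{1,n}\sim \varepsilon_n^{-2}e^{u_{1,n}}(1-e^{u_{1,n}})$, for which only the $L^1$ norm is uniformly bounded (Lemma \ref{cor1}); interpolating with the pointwise bound $O(\varepsilon_n^{-2})$ gives $\|g\|_{L^p}=O(\varepsilon_n^{-2(1-1/p)})\to\infty$, so the inequality yields no smallness to propagate across $K$. The paper replaces this by the contradiction argument just described (which is why Lemma \ref{propertyofentiresolution} is quoted at all). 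Moreover, you tacitly assume the near-zero points stay away from $Z$, but the sequence $x_n$ with $u_{1,n}(x_n)\to 0$ may converge to a vortex point $p_i\in Z$; the paper needs its Case~2 to transfer near-zero values to the circle $|x-p_i|=r_0$, distinguishing $|x_n-p_i|=O(\varepsilon_n)$ --- where the rescaled limit $\hat u\le 0$ with $\hat u(y_0)=0$ and the Hopf lemma gives a contradiction with the Dirac source --- from $|x_n-p_i|/\varepsilon_n\to\infty$, where the intermediate-value/rescaling argument is run along the segment from $x_n$ to $r_0x_n/|x_n|$. Without these ingredients your proof of (i), and hence also the claim $\|u_{2,n}\|_{L^\infty(\mathbb{T})}\to 0$ needed to start your Step~2, does not go through.
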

\begin{proof}
 (i) We assume that $\lim_{ n \to\infty }\Big(\inf_\mathbb{T}|{u}_{1,n}|\Big)=0$ holds true.  Suppose that
\begin{equation}|{u}_{1,n}(x_{ n })|=\inf_\mathbb{T}|{u}_{1,n}|, \ \ \textrm{and} \ \lim_{{ n \to\infty}}{u}_{1,n}(x_{ n })=0.\label{to0}\end{equation}
Passing to a subsequence (still denoted by ${u}_{1,n}$),
we   assume that $\lim_{ n \to\infty }x_{ n }=x_0\in \mathbb{T}$. Now we consider the following two cases depending on the location of $x_0$.

\medskip
\noindent
\textbf{Case 1.} $x_0\notin Z$.\\
Fix a small constant  $d>0$ satisfying  $B_d(x_0)\cap Z=\emptyset$.
We argue by contradiction and suppose that there exist a compact set $K\subset\mathbb{T}\setminus Z$, a constant $c_K>0$, and a sequence $\{z_{ n }\}\subset K$  such that
$\sup_K|{u}_{1,n}|=|{u}_{1,n}(z_{ n })|\ge c_K>0$ for  large $\lm0$. Let $K_1\subset\mathbb{T}\setminus Z$ be  a connected compact set  satisfying $B_d(x_0)\cup K \subset K_1$.
  By  ${u}_{1,n}(z_{ n })\le-c_K<0$ and  Lemma \ref{propertyofentiresolution}, we have a constant  $s_1<0$ satisfying
\begin{equation}\label{chs1}\beta(s_1)  >16\mathfrak{M}\ \textrm{and}\ -c_K<s_1<0.\end{equation}
In view of the intermediate value theorem, there is a sequence  $y_{ n }\in K_1$ such that ${u}_{1,n}(y_{ n })=s_1$.\\
For $k=1,2$, let $ \bar{u}_{k, n }(x)=\left({u}_{k,n}  \right)({\varepsilon_n} x+y_{ n })$, where $x\in\mathbb{T}_{y_{ n }}\equiv\{\ x\in\mathbb{R}^2\ |\ {\varepsilon_n} x+y_{ n }\in K_1\ \}$.
Then $ \bar{u}_{1, n }$ satisfies
\begin{equation*}
\begin{aligned}
\left\{
 \begin{array}{ll}
 \Delta  \bar{u}_{1, n }+ e^{\bar{u}_{1, n }(x)}(1- e^{\bar{u}_{1, n }(x)})+\sigma_n^2 e^{\bar{u}_{2,n}}(1- e^{\bar{u}_{1, n }(x)})-\sigma_n (e^{\bar{u}_{1,n}}+e^{\bar{u}_{2,n}})(1-e^{\bar{u}_{2,n}})=0
\    \textrm{in}\ \ \mathbb{T}_{y_{ n }},
 \\  \bar{u}_{1, n }(0)=s_1,
 \\ \int_{\mathbb{T}_{y_{ n }}} \left|e^{ \bar{u}_{1, n }(x) }\left(1-e^{\bar{u}_{1,n}(x)}\right)\right| dx\le {8\pi\mathfrak{M}}.
 \end{array}\right.
\end{aligned}
\end{equation*}
From  Lemma \ref{grad_sol} and $W^{2,p}$ estimation, we see that $ \bar{u}_{1, n }$ is bounded in $C^{1,\sigma}_{\textrm{loc}}(\mathbb{T}_{y_{ n }})$ for some $\alpha\in(0,1)$.  Then we see that  passing to a subsequence,  $ \bar{u}_{1, n }$ converges in
$C^1_{\textrm{loc}}(\mathbb{R}^2)$ to a function $u_*$, which is a solution of
\begin{equation}
\begin{aligned}
\left\{
 \begin{array}{ll}
 \Delta u_*+ e^{u_*}(1-e^{u_*}) =0\ \textrm{in}\ \mathbb{R}^2,
 \\ u_*(0)=s_1,
 \\ \int_{\mathbb{R}^2}|e^{u_*}(1-e^{u_*})|dx\le {8\pi\mathfrak{M}}.
 \end{array}\right.
\end{aligned}
\end{equation}
By    Lemma \ref{propertyofentiresolution}, we see that $u_*$ is
radially symmetric with respect to a point $\bar{p}$ in $\mathbb{R}^2$. In view of   Lemma \ref{propertyofentiresolution} and \eqref{chs1}, we get
\begin{equation}
\begin{aligned}
{8\pi\mathfrak{M}}
\ge \Big|\int_{\mathbb{R}^2} e^{u_*}(1-e^{u_*}) dx\Big|
=2\pi|\beta(u_*(\bar{p}))|\ge2\pi|\beta(s_1)|
>{32\pi\mathfrak{M}},
\end{aligned}
\end{equation}
which implies a contradiction, and we prove that    \eqref{K_1} holds true for  Case 1.

\medskip
\noindent
\textbf{Case 2.} $x_0=p_i\in Z$ for some $i\in\{1,\cdots,N\}$.
\\ Choose a   constant $r_0>0$ such that $B_{r_0}(x_0)\cap Z=\{x_0\}$.
For the  simplicity, we assume that $x_0=0$.   We claim that
\begin{equation}\lim_{{ n \to\infty}}\left(\inf_{|x|=r_0}|\ulm(x)|\right)=0.\label{cmlt}\end{equation}
Once we obtain the claim \eqref{cmlt}, the argument in Case 1 implies \eqref{K_1}. To prove   \eqref{cmlt}, we argue by contradiction,  and suppose that  there is a constant $\tau_0>0$ satisfying, up to a subsequence, $\lim_{{ n \to\infty}}\left(\inf_{|x|=r_0}|{u}_{1,n}(x)|\right)\ge\tau_0$. By  Lemma \ref{lemma_RT}, we have ${u}_{1,n}<0$, and thus
\begin{equation}\label{gam}\lim_{{ n \to\infty}}\left(\sup_{|x|=r_0}\ulm(x)\right)<-\tau_0.\end{equation} We consider  the following two cases:

\medskip

\noindent
(1)  $\lim_{{ n \to\infty}}\left( \frac{|x_{n}|}{\varepsilon_n}\right)<+\infty$.
\\ Let  $v_{ n }(x)=\ulm(x)-4m_i\ln|x|$ near $x=0$. Then  $v_{ n }$ is  smooth  in $B_d(0)$. We apply a scaling for $v_{ n }$ as follows:
\[
\hat{v}_{ n }(x)= v_{ n }(|x_{ n }|x)+4m_i\ln|x_{ n }| \ \ \textrm{for} \ \ |x|\le\frac{r_0}{|x_{ n }|}.
\]
Then $\hat{v}_{ n }$ satisfies
\begin{equation}\label{vmain_eq}
\left\{\begin{array}{l}
\Delta \hat{v}_{ n }=\frac{|x_{n}|^2}{\varepsilon_n^2} \Bigg[|x|^{4m_i}e^{\hat{v}_{ n }(x)}(|x|^{4m_i}e^{\hat{v}_{ n }(x)}-1)+\sigma_n^2 e^{{u}_{2,n}(|x_n|x)}(|x|^{4m_i}e^{\hat{v}_{ n }(x)}-1)
\\ \quad\quad\quad -\sigma_n (|x|^{4m_i}e^{\hat{v}_{ n }(x)}+e^{{u}_{2,n}(|x_n|x)})(e^{{u}_{2,n}(|x_n|x)}-1)\Bigg]\ \ \textrm{in}\  B_{\frac{r_0}{|x_{ n }|}}(0),\\
\int_{B_{\frac{r_0}{|x_{ n }|}}(0)}\frac{|x_{n}|^2}{\varepsilon_n^2}|x|^{4m_i} e^{\hat{v}_{ n }(x)   }\left(1-|x|^{4m_i} e^{\hat{v}_{ n }(x)   }\right)dx\le 8\pi\mathfrak{M}
\end{array}
\right.
\end{equation}
By \eqref{to0}, we have
\begin{equation}\label{limv}\lim_{{ n \to\infty}}\hat{v}_{ n }
\left(\frac{x_{ n }}{|x_{ n }|}\right)
=\lim_{{ n \to\infty}}\left(\ulm(x_{ n })
  \right)=0.\end{equation}  In view of  Lemma \ref{grad_sol} and the assumption $\lim_{{ n \to\infty}}\left(  \frac{|x_{ n }|}{\varepsilon_n}\right)<+\infty$, we see that $\hat{v}_{ n }$ is bounded in $C^0_{\textrm{loc}}\left(B_{r_1}(0)\right)$.  Thus, we have a point $y_0\in\mathbb{S}^1$,  a constant  $c_0\ge0$, and a function $\hat{v}$ satisfying, passing to a subsequence, \[\lim_{{ n \to\infty}}\frac{x_{ n }}{|x_{ n }|}=y_0\in \mathbb{S}^1,\ \lim_{{ n \to\infty}}\left(\frac{|x_{n}|}{\varepsilon_n}\right)=c_0\ge0, \ \textrm{and}\ \hat{v}_{ n }\to\hat{v}\ \textrm{ in}\ C_{\textrm{loc}}^1\left(B_{r_1}(0)\right).\]  We note that $\hat{u}(x)=\hat{v}(x)+4m_i\ln|x|$ satisfies
\[
\Delta\hat{u}+c_0^2e^{\hat{u}}(1-e^{\hat{u}})=8\pi m_i\delta_0\ \ \ \textrm{in}\ \ \RN.
\]
From Lemma \ref{lemma_RT}, we have $\hat{u}\le 0$ in $\RN$. Since  $\hat{u}(y_0)= 0$,  we have $\hat{u}\equiv0$ by Hopf Lemma, which  contradicts.

\medskip
\noindent
(2)
$\lim_{{ n \to\infty}}\left( \frac{|x_{n}|}{\varepsilon_n}\right)= +\infty$.\\
Lemma \ref{propertyofentiresolution} implies there is a constant  $s_2<0$ such that
\[
\beta(s_2)  >16\mathfrak{M}\ \textrm{and}\ -\tau_0<s_2<0,
\]
where $\tau_0$ is the constant in \eqref{gam}.
We can also choose $\hat{y}_{ n }$ on the line segment  joining $x_{ n }$ and $\frac{r_0 x_{ n }}{|x_{ n }|}$ such that ${u}_{1,n}(\hat{y}_{ n })=s_2$ and $|\hat{y}_{ n }|\ge |x_{ n }|$ by the intermediate value theorem.\\
For $k=1,2$, let $\hat{u}_{k, n }(x)=\left({u}_{k,n}  \right)({\varepsilon_n} x+\hat{y}_{ n })$ for $x\in\hat{\mathbb{T}}_{\hat{y}_{ n }}\equiv\{\ x\in\mathbb{R}^2\ |\ {\varepsilon_n} x+\hat{y}_{ n }\in B_{\frac{|x_{ n }|}{2}}(\hat{y}_{ n }) \ \}$.  Here we note that $0\notin B_{\frac{|x_{ n }|}{2}}(\hat{y}_{ n }) $.
Then $\hat{u}_{1, n }$ satisfies
\begin{equation}
\begin{aligned}
\left\{
 \begin{array}{ll}
 \Delta \hat{u}_{1, n }
+ e^{\hat{u}_{1, n }(x)}(1- e^{\hat{u}_{1, n }(x)})+\sigma_n^2 e^{\hat{u}_{2,n}}(1- e^{\hat{u}_{1, n }(x)})-\sigma_n (e^{\hat{u}_{1,n}}+e^{\hat{u}_{2,n}})(1-e^{\hat{u}_{2,n}})=0\ \ \  \textrm{in}\ \ \hat{\mathbb{T}}_{\hat{y}_{ n }},
 \\ \hat{u}_{1, n }(0)=s_2,
 \\ \int_{\hat{\mathbb{T}}_{\hat{y}_{ n }}} |e^{\hat{u}_{1, n }}(1-e^{\hat{u}_{1, n }})| dx\le {8\pi\mathfrak{M}}.
 \end{array}\right.\label{nonl}
\end{aligned}
\end{equation} Using the same argument as in Case 1, we get a  contradiction by comparing  $L^1$ norm of $ e^{\hat{u}_{1, n }(x)}(1- e^{\hat{u}_{1, n }(x)})$.  Thus the claim \eqref{cmlt} holds true.  Then we can again apply the arguments in  Case 1  and prove \eqref{K_1} holds true.

(ii) By applying the similar argument  in (i) to ${u}_{2,n}$, we can also prove that  if $\lim_{ n \to\infty }\Big(\inf_\mathbb{T}|{u}_{2,n}|\Big)=0,$ then   $\lim_{{ n \to\infty}}\|{u}_{2,n}\|_{L^\infty(\mathbb{T})}=0$. We note that ${u}_{2,n}$ satisfies
\begin{equation*}
\Delta {u}_{2,n} -\frac{1}{\varepsilon_n^2}{u}_{2,n}=\frac{1}{\varepsilon_n^2}\left\{e^{{u}_{2,n}}(e^{{u}_{2,n}}-1)- {u}_{2,n}+\sigma_n^2 e^{{u}_{1,n}}(e^{{u}_{2,n}}-1)-\sigma_n (e^{{u}_{1,n}}+e^{{u}_{2,n}})(e^{{u}_{1,n}}-1)\right\}.
\end{equation*}
 By  applying  Taylor's theorem to the function $e^{{u}_{2,n}}(e^{{u}_{2,n}}-1)$ and using  Theorem  \ref{theoremb} , we see that there are constants $c_1, c_2>0$, independent of $n\ge1$, satisfying
 \begin{equation*}
   \begin{aligned}
 \|{u}_{2,n}\|_{L^\infty(\mathbb{T})}& \leq c_1
 \| e^{{u}_{2,n}}(e^{{u}_{2,n}}-1)- {u}_{2,n}+\sigma_n^2 e^{{u}_{1,n}}(e^{{u}_{2,n}}-1)-\sigma_n (e^{{u}_{1,n}}+e^{{u}_{2,n}})(e^{{u}_{1,n}}-1) \|_{L^\infty(\mathbb{T})}
\\& \le c_2\left( \|{u}_{2,n}\|_{L^\infty(\mathbb{T})}^2+\sigma_n^2  + 2\sigma_n    \right).\end{aligned}\end{equation*}
By $\lim_{{ n \to\infty}}\|{u}_{2,n}\|_{L^\infty(\mathbb{T})}=0$ and \eqref{assume4}, we can conclude that \eqref{K_2} holds.
\end{proof}

As a corollary of Lemma \ref{uniformestimates}, we get the following proposition.
\begin{proposition}\label{alternatives}
Let $({u}_{1,n}, \nl)$ be a sequence of solutions of \eqref{main_eq}.

(1) up to subsequences,  $u_{1,n}$ satisfies one of the followings:\\
(1a) ${u}_{1,n}\to0$ uniformly on any compact subset of $\mathbb{T}\setminus Z$ as ${ n \to\infty}$, or\\
(1b) there exists a constant $\nu_0>0$ such that $\sup_{ n \to\infty }\Big(\sup_\mathbb{T} {u}_{1,n}\Big)\le-\nu_0$.

(2) up to subsequences,  $u_{2,n}$ satisfies one of the followings:
\\
(2a)  there is a constant $c_0>0$, independent of $n\ge1$, satisfying  $\|{u}_{2,n}\|_{L^\infty(\mathbb{T})}\le c_0 \varepsilon_n^2$, or
\\
(2b) there exists a constant $\nu_0>0$ such that $\sup_{ n \to\infty }\Big(\sup_\mathbb{T} {u}_{2,n}\Big)\le-\nu_0$.

\end{proposition}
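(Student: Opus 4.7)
The plan is to reduce Proposition \ref{alternatives} to a simple dichotomy on the quantity $\inf_{\mathbb{T}} |u_{k,n}|$, feeding the ``small'' branch into Lemma \ref{uniformestimates} and reading off the ``bounded away from zero'' branch directly. The key observation is that by Lemma \ref{lemma_RT} we have $u_{k,n}(x) < 0$ on $\mathbb{T}$, so $|u_{k,n}| = -u_{k,n}$ and therefore
\[
\inf_{\mathbb{T}} |u_{k,n}| \;=\; -\sup_{\mathbb{T}} u_{k,n}
\]
for each $k \in \{1,2\}$ and each $n \geq 1$. Consequently, a positive lower bound on $\inf_{\mathbb{T}} |u_{k,n}|$ is the same as a negative upper bound on $\sup_{\mathbb{T}} u_{k,n}$, which is exactly the form in which alternatives (1b) and (2b) are phrased.

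First I would treat the first component. Set $a_{1,n} := \inf_{\mathbb{T}} |u_{1,n}|$. If $\liminf_{n \to \infty} a_{1,n} = 0$, extract a subsequence (not relabeled) along which $a_{1,n} \to 0$; then Lemma \ref{uniformestimates}(i) applies and delivers alternative (1a). Otherwise $\liminf_{n\to\infty} a_{1,n} =: 2\nu_0 > 0$, and after passing to a subsequence one has $\sup_{\mathbb{T}} u_{1,n} = -a_{1,n} \leq -\nu_0$ for all large $n$, which is alternative (1b).

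Next, working along the subsequence just selected, I repeat the same dichotomy for the second component with $a_{2,n} := \inf_{\mathbb{T}} |u_{2,n}|$. If $\liminf_{n \to \infty} a_{2,n} = 0$, Lemma \ref{uniformestimates}(ii) yields the stronger conclusion $\|u_{2,n}\|_{L^\infty(\mathbb{T})} \leq c_0 \varepsilon_n^2$, which is alternative (2a); if instead $\liminf_{n\to\infty} a_{2,n} > 0$, a further subsequence satisfies $\sup_{\mathbb{T}} u_{2,n} \leq -\nu_0$, giving (2b). Essentially no obstacle arises, since Lemma \ref{uniformestimates} has already done the analytic work (the blow-up/Hopf-lemma argument for the first component and the linear-operator estimate via Theorem \ref{theoremb} combined with a Taylor expansion of $e^{u_{2,n}}(e^{u_{2,n}}-1)$ for the second); the role of Proposition \ref{alternatives} is only to bifurcate on whether $\inf_{\mathbb{T}} |u_{k,n}|$ stays bounded away from zero and, via the negativity from Lemma \ref{lemma_RT}, to restate the complementary case as a uniform negative upper bound on $\sup_{\mathbb{T}} u_{k,n}$ along the chosen subsequence.
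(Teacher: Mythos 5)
Your proposal is correct and follows essentially the same route as the paper, which simply records Proposition \ref{alternatives} as an immediate corollary of Lemma \ref{uniformestimates}: one splits on whether $\inf_{\mathbb{T}}|u_{k,n}|$ tends to zero along a subsequence, and in the complementary case uses the negativity $u_{k,n}<0$ from Lemma \ref{lemma_RT} to rewrite the bound as $\sup_{\mathbb{T}}u_{k,n}\le-\nu_0$. Your explicit bookkeeping of the dichotomy and of the nested subsequences is exactly the argument the paper leaves implicit.
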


For $k=1,2$, let us denote
\[
w_{k, n }= {u}_{k,n}   -2\ln\varepsilon_n \quad\mbox{in }~ \mathbb{T}.
\]
For the second component, we have the following result.
\begin{lemma}\label{2ndalternatives}
Let $({u}_{1,n}, \nl)$ be a sequence of solutions of \eqref{main_eq}. \\
If   $u_{2,n}$ satisfies   Proposition \ref{alternatives}-(2b), then
 $\sup_{\mathbb{T}}\left({w}_{2,n}\right)\to-\infty$ as $n\to\infty$.
\end{lemma}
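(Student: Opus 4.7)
The plan is to argue by contradiction: suppose along some subsequence that $S_n := \sup_{\mathbb{T}} w_{2,n} \ge -C_0$ for some $C_0>0$, and exploit a sharp $L^1$-smallness estimate for $e^{w_{2,n}}$ together with a rescaling at scale $\varepsilon_n$ around a maximizer of $u_{2,n}$. My first step is to show the sharp bound $\int_{\mathbb{T}} e^{w_{2,n}}\,dx = O(\sigma_n)=O(\varepsilon_n^2)$. Integrating the second line of \eqref{main_eq} over $\mathbb{T}$ and rearranging yields
\[
\int_{\mathbb{T}}\frac{(1-e^{u_{2,n}})(e^{u_{2,n}}+\sigma_n^2 e^{u_{1,n}})}{\varepsilon_n^2}\,dx = \int_{\mathbb{T}}\frac{\sigma_n(e^{u_{1,n}}+e^{u_{2,n}})(1-e^{u_{1,n}})}{\varepsilon_n^2}\,dx.
\]
By Proposition~\ref{alternatives}-(2b), $(1-e^{u_{2,n}})\ge c:=1-e^{-\nu_0}>0$, so the left side dominates $c\int_{\mathbb{T}} e^{w_{2,n}}\,dx$. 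Splitting the right side by the two summands in $(e^{u_{1,n}}+e^{u_{2,n}})$, applying Lemma~\ref{cor1} to the $e^{u_{1,n}}(1-e^{u_{1,n}})/\varepsilon_n^2$ piece and $(1-e^{u_{1,n}})\le 1$ to the other, I bound it by $8\pi\mathfrak{M}\sigma_n+\sigma_n\int_{\mathbb{T}} e^{w_{2,n}}\,dx$. Since $\sigma_n\le \mathfrak{N}\varepsilon_n^2\to 0$ by \eqref{assume4}, the residual $\sigma_n\int e^{w_{2,n}}$ is absorbed into the left for $n$ large, yielding the claim.

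Next I rescale around a maximizer $x_n$ of $u_{2,n}$. Set $m_n:=\sup u_{2,n} = S_n+2\ln\varepsilon_n\le -\nu_0$, $\tilde w_n(y):= w_{2,n}(\varepsilon_n y+x_n)$, $\tilde z_n:= \tilde w_n - S_n$, and $\tilde u_{1,n}(y):=u_{1,n}(\varepsilon_n y+x_n)$. Then $\tilde z_n(0)=0$, $\tilde z_n\le 0$, and Lemma~\ref{grad_sol} gives $|\nabla \tilde z_n|\le C$. The rescaled second equation becomes
\[
\Delta \tilde z_n = e^{m_n+\tilde z_n}(e^{m_n+\tilde z_n}-1) + \sigma_n^2 e^{\tilde u_{1,n}}(e^{m_n+\tilde z_n}-1) - \sigma_n(e^{\tilde u_{1,n}}+e^{m_n+\tilde z_n})(e^{\tilde u_{1,n}}-1),
\]
whose right-hand side is uniformly bounded on compacts because $e^{m_n+\tilde z_n}\le e^{-\nu_0}$, $e^{\tilde u_{1,n}}\le 1$, and $\sigma_n,\sigma_n^2\to 0$. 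Interior $W^{2,p}$-estimates provide a subsequence along which $\tilde z_n\to Z$ in $C^1_{\textrm{loc}}(\mathbb{R}^2)$ and $m_n\to m^*\in[-\infty,-\nu_0]$. If $m^*=-\infty$ then $\Delta Z=0$, $Z(0)=0=\max Z$, $Z\le 0$, and the strong maximum principle gives $Z\equiv 0$; if $m^*$ is finite then $V:=m^*+Z$ satisfies $\Delta V+e^V(1-e^V)=0$ in $\mathbb{R}^2$ with $V(0)=m^*<0$ and $V\le m^*$, so Lemma~\ref{propertyofentiresolution} identifies $V$ as a nontrivial radial profile and in particular $\int_{B_R(0)} e^Z>0$ for every $R>0$.

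The contradiction then comes from the uniform bound, valid by Step~1,
\[
\int_{B_R(0)} e^{\tilde w_n(y)}\,dy = \frac{1}{\varepsilon_n^2}\int_{B_{R\varepsilon_n}(x_n)} e^{w_{2,n}(z)}\,dz \le \frac{1}{\varepsilon_n^2}\int_{\mathbb{T}} e^{w_{2,n}}\,dz \le K
\]
independently of $R$ and $n$. Writing $\int_{B_R(0)}e^{\tilde w_n} = e^{S_n}\int_{B_R(0)}e^{\tilde z_n}$, I consider two cases. If $m^*$ is finite then $S_n=m_n-2\ln\varepsilon_n\to+\infty$ while $\int_{B_R(0)}e^{\tilde z_n}\to\int_{B_R(0)}e^Z>0$, so the left side blows up, contradicting $\le K$. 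If $m^*=-\infty$ then $Z\equiv 0$ gives $\int_{B_R(0)}e^{\tilde z_n}\to \pi R^2$, hence $e^{S_n}\le K(1+o(1))/(\pi R^2)$; sending $n\to\infty$ and then $R\to\infty$ forces $S_n\to-\infty$, contradicting $S_n\ge -C_0$. The main obstacle, and the point where the argument deviates from the Chern-Simons-Higgs analogue, is Step~1: the $O(\varepsilon_n^2)$ rate on $\int e^{w_{2,n}}$ (rather than mere boundedness) is what keeps $\int_{B_R(0)}e^{\tilde w_n}$ bounded rather than of order $\varepsilon_n^{-2}$, and this sharp rate, inherited from $\sigma_n=O(\varepsilon_n^2)$ through the absorption step, is what ultimately drives the rescaled compactness and Liouville comparison.
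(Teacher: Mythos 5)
Your argument is correct, and it reaches the conclusion by a genuinely different route in the blow-up exclusion step. Your Step~1 is essentially the paper's estimate \eqref{contminimal}: integrating the second line of \eqref{main_eq}, bounding the right-hand side by Lemma~\ref{cor1}, Lemma~\ref{lemma_RT} and \eqref{assume4}, and absorbing the $\sigma_n\int e^{w_{2,n}}$ term; the paper only records the consequence $\int_{\mathbb{T}}e^{w_{2,n}}\to 0$ in \eqref{w2main_ieq}, whereas you retain the sharp rate $O(\sigma_n)=O(\varepsilon_n^2)$, which is exactly what your later argument needs. After that the two proofs diverge: the paper splits into the cases $\sup_{\mathbb{T}}w_{2,n}\le C$ (handled by the Harnack alternative of Lemma~\ref{harnarkineq}) and $\sup_{\mathbb{T}}w_{2,n}\to+\infty$ (handled by a Brezis--Merle type blow-up at the scale $s_n=e^{-\frac12 w_{2,n}(x_n)}$, where the classification results of Chen--Li and Lemma~\ref{propertyofentiresolution} give the minimal mass $8\pi$, contradicting \eqref{w2main_ieq}); you instead treat both cases at once by rescaling at the fixed scale $\varepsilon_n$ around the maximum point of $u_{2,n}$, using the gradient bound of Lemma~\ref{grad_sol} for compactness and a simple dichotomy in the limit ($Z\equiv 0$ by the strong maximum principle when $m^*=-\infty$, or a nontrivial entire solution when $m^*$ is finite), and then contradicting the uniform bound $\int_{B_R(0)}e^{\tilde w_n}\le K$ furnished by the $O(\varepsilon_n^2)$ rate. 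Your route is more elementary and self-contained: it avoids the Harnack lemma and the minimal-mass/classification machinery entirely (your citation of Lemma~\ref{propertyofentiresolution} in Case~B is not actually justified as stated, since $e^{V}(1-e^{V})\in L^1(\mathbb{R}^2)$ has not been checked, but it is also not needed --- the only fact you use is $\int_{B_R(0)}e^{Z}>0$, which is automatic for a continuous limit $Z$). What the paper's heavier approach buys is the quantitative concentration information (mass at least $8\pi$ at any would-be blow-up point of $w_{2,n}$), which mirrors the analysis later performed for $w_{1,n}$; your approach buys brevity and no reliance on Liouville-type classification for this particular lemma. One small point of care: the bound $\int_{B_R(0)}e^{\tilde w_n}\le K$ should be read for fixed $R$ and $n$ large (so that $B_{R\varepsilon_n}(x_n)$ sits inside one fundamental domain), which is exactly the regime your limiting argument uses.
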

\begin{proof}
The function $w_{2, n }$ satisfies
 \begin{equation}\label{w2main_eq}\begin{aligned}
 \Delta {w}_{2,n}+ e^{{w}_{2,n}}\left\{1-\varepsilon_n^2 e^{{w}_{2,n}} - \sigma_n (1-e^{{u}_{1,n}})\right\}+\frac{\sigma_n^2}{\varepsilon_n^2} e^{{u}_{1,n}}(1-\varepsilon_n^2 e^{{w}_{2,n}})
-\frac{\sigma_n}{\varepsilon_n^2}e^{{u}_{1,n}}(1-e^{{u}_{1,n}})
 =0\ \mbox{in }\  \mathbb{T}.
\end{aligned}
\end{equation}
 By using Lemma \ref{cor1} and  $\frac{\sigma_n}{\varepsilon_n^2}\le\mathfrak{N}$, we see that
\begin{equation}\begin{aligned}\label{contminimal}
\sigma_n(8\pi\mathfrak{M})&\ge \int_{\mathbb{T}}\frac{\sigma_n}{\varepsilon_n^2}e^{{u}_{1,n}}(1-e^{{u}_{1,n}})dx
\\&= \int_{\mathbb{T}} e^{{w}_{2,n}}\left\{1-\varepsilon_n^2 e^{{w}_{2,n}} - \sigma_n (1-e^{{u}_{1,n}})\right\}+\frac{\sigma_n^2}{\varepsilon_n^2} e^{{u}_{1,n}}(1-\varepsilon_n^2 e^{{w}_{2,n}})
dx
\\&\ge  \frac{(1-e^{-\nu_0})}{2}\int_{\mathbb{T}}   e^{{w}_{2,n}}dx,
\end{aligned} \end{equation}
 which implies \begin{equation}\label{w2main_ieq}\lim_{n\to\infty}\int_{\mathbb{T}}   e^{{w}_{2,n}}dx=0.\end{equation}
We consider the following two cases:

\medskip\noindent
\textit{(case 1)} $\sup_{\mathbb{T}}\wlmt\le C$ for some constant $C>0$.  \\
 In this case, by applying  Lemma \ref{harnarkineq} to  \eqref{w2main_eq},  we see that  either  $\wlmt$ is uniformly bounded in $L^\infty(\mathbb{T})$, or  $\sup_{\mathbb{T}}\left({w}_{2,n}\right)\to-\infty$  as $n\to\infty$. By  \eqref{w2main_ieq}, we see that \textit{(case 1)}  implies  $\sup_{\mathbb{T}}\left({w}_{2,n}\right)\to-\infty$ as $n\to\infty$.

\medskip\noindent
\textit{(case 2)} $\lim_{{ n \to\infty}}\sup_{\mathbb{T}}\wlmt=+\infty.$

Following \cite{BM}, we say that a point $q\in \mathbb{T}$ is a blow-up point for $\{w_{2, n }\}$
if there exists a sequence $\{x_{ n ,q}\}$ such that
\begin{equation}\label{blowuppoint}x_{ n ,q} \to q \quad\mbox{and}\quad w_{2, n } (x_{ n ,q}) \to \infty
 \quad\mbox{as }~  n \to\infty. \end{equation}
Let $S_2\subset\mathbb{T} $ be   the set of  blow-up points for $\{w_{2, n }\}$.
We claim that $S_2=\emptyset$ so that  \textit{(case 2)} cannot occur. In order to prove this claim, we argue by contradiction, and suppose that  there is a point $p\in S_2$.
We are going to show  the following ``minimal mass" result:
\begin{equation}\label{massofsingularpoint2} \liminf_{ n \to\infty }\int_{B_d(p)} \left| e^{{w}_{2,n}}\left(1-\varepsilon_n^2 e^{{w}_{2,n}} \right)\right| dx \ge 8\pi\ \ \textrm{for any}\ \ d>0.\end{equation}  In order to obtain \eqref{massofsingularpoint2}, we will apply the arguments in \cite[Lemma 4.2]{CK}.  We first choose a small constant $d>0$ and   a sequence of points
$\{x_{ n }\}$ in $B_d(p)$ such that
$w_{2, n }(x_{ n }) =\sup_{x\in B_{d}(p)} w_{2, n }(x)\to\infty$ and $x_{ n }\to p$ as ${n}\to\infty$.
Let
\[ s_{ n } = \exp\Big(-\frac{1}{2}w_{2, n }(x_{ n }) \Big). \]
We let $\bar{w}_{2, n }(x)=w_{2, n }(s_{ n } x+x_{ n })+2\ln s_{ n }$  and $\bar{u}_{1, n }(x)=u_{1, n }(s_{ n } x+x_{ n })$ for $|x|<\frac{d}{2s_{ n }}$.
Then $\bar{w}_{2,n }$ satisfies
\begin{equation*}
\Delta \bar{w}_{2,n}+ e^{\bar{w}_{2,n}}\left\{1-\frac{\varepsilon_n^2}{s_n^2} e^{\bar{w}_{2,n}} - \sigma_n (1-e^{\bar{u}_{1,n}})\right\}+\frac{\sigma_n^2s_n^2}{\varepsilon_n^2} e^{\bar{u}_{1,n}}\left(1-\frac{\varepsilon_n^2}{s_n^2} e^{\bar{w}_{2,n}}\right)
-\frac{\sigma_ns_n^2}{\varepsilon_n^2}e^{\bar{u}_{1,n}}(1-e^{\bar{u}_{1,n}})
 =0 \quad\mbox{in }~ B_{\frac{d}{2s_{ n }}}(0).
\end{equation*}
Since ${u}_{2,n}$ satisfies  Proposition \ref{alternatives}-(2b), we see that
\begin{equation*}
\frac{\varepsilon_n^2}{s_{ n }^2}=\exp(w_{2, n }(x_{ n })+2\ln\varepsilon_n)\le e^{-\nu_0}<1,
\end{equation*} and thus there is a constant $c_0\in[0,1)$ satisfying $\lim_{n\to\infty}\frac{\varepsilon_n^2}{s_{ n }^2}=c_0^2$. \\
Moreover, since $\bar{w}_{2, n }(x)=w_{2, n }(s_{ n } x+x_{ n })+2\ln s_{ n }\le \bar{w}_{2, n }(0)=w_{2, n }(x_{ n })+2\ln s_{ n }=0$
for $|x|<\frac{d}{2s_{ n }}$, it follows from Lemma \ref{cor1} and \eqref{assume4} that
\begin{equation*}
\begin{aligned}
\left\{
 \begin{array}{ll}
 0\le |\Delta\bar{w}_{2, n }|\le  {2}  \quad \mbox{in }~  B_{d/(2s_{ n })}(0), \\ [1ex]
\left\|e^{\bar{w}_{2,n}}\left(1-\frac{\varepsilon_n^2}{s_n^2} e^{\bar{w}_{2,n}}  \right)   \right\|_{L^1(B_{d/(2s_{ n })}(0))} \le 2\mathfrak{N},
 \end{array}\right.
\end{aligned}
\end{equation*}
%Green's representation formula shows that $|\nabla \bar{w}_{2, n }(x)| \le C$ uniformly for
%$|x|\le d/(2s_{ n })$.
By  Lemma \ref{harnarkineq},
$\{\bar{w}_{2, n }\}$ is bounded in $C^0_{\textrm{loc}}(B_{d/(2s_{ n })}(0))$.
There is  a function $w_*$ satisfying, passing to a subsequence,
$\bar{w}_{2, n }\to w_*$ in $C^2_{\textrm{loc}}(\mathbb{R}^2)$ as $n\to\infty$, where $w_*$ satisfies
\begin{equation*}
\begin{aligned}
\left\{
 \begin{array}{ll}
\Delta w_*+ e^{w_*}(1-c^2_0e^{w_*}) =0
 \quad \mbox{in }~ \mathbb{R}^2, \\ [1ex]
 \int_{\mathbb{R}^2} e^{w_*} |1-c^2_0e^{w_*}|  dx \le {2\mathfrak{N}},
 \quad\mbox{and}\quad w_*\le 0 \quad\mbox{in }~ \mathbb{R}^2.
 \end{array}\right.
\end{aligned}
\end{equation*}
If $c_0$=0, then we see that $\int_{\mathbb{R}^2}   e^{w_*}dx =8\pi$ from \cite{ChL1}.\\
If $c_0>0$, then we consider the function $\phi(x)=w_*(c_0x)+2\ln c_0$. It is obvious that
\begin{equation*}
\begin{aligned}
\left\{
 \begin{array}{ll}
\Delta \phi+  e^{\phi}(1-e^{\phi}) =0 \quad \mbox{in }~ \mathbb{R}^2, \\ [1ex]
 \phi\le 2\ln c_0<0, \quad\mbox{and}\quad 0<\int_{\mathbb{R}^2} e^{\phi}(1-e^{\phi}) = dx \le {2\mathfrak{N}}.
 \end{array}\right.
\end{aligned}
\end{equation*}
In view of Lemma \ref{propertyofentiresolution}, we also get that
\[ \int_{\mathbb{R}^2}  e^{\phi}(1-e^{\phi})  dx \ge 8\pi. \]
From Fatou's lemma,  we obtain  \eqref{massofsingularpoint2}. However, it  contradicts  the estimation    \eqref{w2main_ieq}, and thus we can exclude  the blow up phenomena for $w_{2,n}$, that is, \textit{(case 2)}.  Now we complete the proof of Lemma  \ref{2ndalternatives}.
\end{proof}
 In view of   Proposition \ref{alternatives}-(2a) and  Lemma \ref{2ndalternatives}, we can improve the gradient estiation of ${u}_{2,n}$ in Lemma \ref{grad_sol}.
\begin{lemma}\label{2nd_grad}Let $({u}_{1,n},{u}_{2,n})$ be a sequence of solutions for  \eqref{main_eq} over $\mathbb{T}$. Then there exists a constant $C>0$, independent of ${n}\ge1$, such that
$\left\|\nabla {u}_{2,n} \right\|_{L^\infty(\mathbb{T})}\le  {C}.$\end{lemma}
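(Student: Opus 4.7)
The plan is to exploit the refined dichotomy for $u_{2,n}$ (Proposition \ref{alternatives}-(2), sharpened by Lemma \ref{2ndalternatives}) to upgrade the a priori bound $\|\Delta u_{2,n}\|_{L^\infty(\mathbb{T})}=O(\varepsilon_n^{-2})$ to $O(1)$. Once this is in place, the desired gradient estimate will follow by the same Green's representation argument already used in Lemma \ref{grad_sol}, but now fed by a uniformly bounded $L^\infty$ right hand side rather than only by a term with $L^1$ control.

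The first step I would carry out is the pointwise estimate
\[
\sup_{n\ge 1}\left\|\frac{1}{\varepsilon_n^2}e^{u_{2,n}}(1-e^{u_{2,n}})\right\|_{L^\infty(\mathbb{T})}<\infty,
\]
verified case by case. In Case (2a) of Proposition \ref{alternatives}, one has $\|u_{2,n}\|_{L^\infty(\mathbb{T})}\le c_0\varepsilon_n^2$ and $u_{2,n}\le 0$ by Lemma \ref{lemma_RT}; since $1-e^t\le -t$ for $t\le 0$, a first order Taylor expansion yields $0\le 1-e^{u_{2,n}}\le |u_{2,n}|\le c_0\varepsilon_n^2$ pointwise, so the displayed quantity is bounded by $c_0$. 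In Case (2b), Lemma \ref{2ndalternatives} forces $\sup_{\mathbb{T}}w_{2,n}\to-\infty$, so $\varepsilon_n^{-2}e^{u_{2,n}}=e^{w_{2,n}}\to 0$ uniformly on $\mathbb{T}$ and the term is in fact $o(1)$ in $L^\infty(\mathbb{T})$.

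The remaining two contributions to $\Delta u_{2,n}$ in \eqref{main_eq} are easy to handle: using $e^{u_{k,n}}\le 1$ (Lemma \ref{lemma_RT}), $\sigma_n\in(0,1)$, and $\sigma_n/\varepsilon_n^2\le\mathfrak{N}$ from \eqref{assume4}, one gets
\[
\frac{\sigma_n^2}{\varepsilon_n^2}e^{u_{1,n}}|e^{u_{2,n}}-1|\le \sigma_n\mathfrak{N},\qquad \frac{\sigma_n}{\varepsilon_n^2}\bigl(e^{u_{1,n}}+e^{u_{2,n}}\bigr)|e^{u_{1,n}}-1|\le 2\mathfrak{N}.
\]
Combined with the previous step, this gives $\|\Delta u_{2,n}\|_{L^\infty(\mathbb{T})}\le C$ uniformly in $n$. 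Finally, differentiating the Green's representation
\[
u_{2,n}(x)-\int_{\mathbb{T}}u_{2,n}\,dy=\int_{\mathbb{T}} G(x,y)(-\Delta u_{2,n}(y))\,dy
\]
in $x$ and using $|\nabla_x G(x,y)|\le \frac{1}{2\pi|x-y|}+|\nabla_x\gamma(x,y)|$ together with the integrability of $1/|x-y|$ on $\mathbb{T}\subset\mathbb{R}^2$ yields $\|\nabla u_{2,n}\|_{L^\infty(\mathbb{T})}\le C'\|\Delta u_{2,n}\|_{L^\infty(\mathbb{T})}\le C''$, which is the desired bound. The only genuinely delicate point, and the one I expect to be the main obstacle, is Case (2a): both $u_{2,n}$ and $1-e^{u_{2,n}}$ are of size $O(\varepsilon_n^2)$, so one must carefully exploit the monotonicity inequality $1-e^t\le -t$ for $t\le 0$ to see that their product, once divided by $\varepsilon_n^2$, still remains bounded. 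Every other estimate is a direct propagation of the improved asymptotic description through the equation.
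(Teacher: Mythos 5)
Your proposal is correct and follows essentially the same route as the paper: use the dichotomy from Proposition \ref{alternatives}-(2a) and Lemma \ref{2ndalternatives} to bound $\varepsilon_n^{-2}e^{u_{2,n}}(1-e^{u_{2,n}})$ uniformly in $L^\infty(\mathbb{T})$ (via $1-e^{t}\le -t$ for $t\le 0$ in one case, $e^{w_{2,n}}\to 0$ in the other), control the coupling terms by $\sigma_n/\varepsilon_n^2\le\mathfrak{N}$, and conclude through the Green's representation formula exactly as in Lemma \ref{grad_sol}. Your packaging of this as a uniform bound on $\|\Delta u_{2,n}\|_{L^\infty(\mathbb{T})}$ before invoking the representation is only a cosmetic reorganization of the paper's estimate \eqref{gradu21}--\eqref{gradu212}.
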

\begin{proof}By  the Green's representation formula, we have that
\[u_{2,n}(x)-\int_\mathbb{T} u_{2,n}dy=\int_\mathbb{T} \frac{1}{\varepsilon_n^2} \left\{e^{{u}_{2,n}}(1-e^{{u}_{2,n}})+\sigma_n^2 e^{{u}_{1,n}}(1-e^{{u}_{2,n}})-\sigma_n (e^{{u}_{1,n}}+e^{{u}_{2,n}})(1-e^{{u}_{1,n}})\right\}G(x,y)dy.
\]
In view of  Lemma \ref{lemma_RT},  Lemma \ref{cor1}, and \eqref{assume4}, we see that there are constants $c_0, c_1>0$, independent of $n\ge1$, satisfying
\begin{equation}\begin{aligned}\label{gradu21}
 \left|\nabla_x {u}_{2,n} (x)\right|
 &\le \left|\int_{B_d(x)}\frac{1}{\varepsilon_n^2}e^{{u}_{2,n}(y)}\left(1-e^{{u}_{2,n}}\right)\left(-\frac{x-y}{2\pi|x-y|^2}+\nabla \gamma(x,y)\right)dy\right|+c_0+2\mathfrak{N}+ \mathfrak{N}^2\varepsilon_n^2\\
&\leq c_1 \left( \frac{  \left\|e^{{u}_{2,n}}\left(1-e^{{u}_{2,n}}\right)\right\|_{L^\infty(\mathbb{T})}}{ \varepsilon_n^2} +1\right).
\end{aligned}\end{equation}  In view of   Proposition \ref{alternatives}-(2a) and  Lemma \ref{2ndalternatives}, we note that $u_{2,n}$ satisfies
\begin{equation}\begin{aligned}\label{gradu212}\textrm{either}\ \  \|{u}_{2,n}\|_{L^\infty(\mathbb{T})}=O(\varepsilon_n^2),\ \  \textrm{or}\  \ \sup_{\mathbb{T}}\left({u}_{2,n}   -2\ln\varepsilon_n\right)\to-\infty\ \ \textrm{as}\  \ n\to\infty.\end{aligned}\end{equation}
By using \eqref{gradu21} and \eqref{gradu212}, we can obtain Lemma \ref{2nd_grad}.
\end{proof}

Now we consider the asymptotic behavior of $u_{1,n}$ satisfying Proposition \ref{alternatives}-(1b).
\begin{lemma}\label{1stalternatives}
Let $({u}_{1,n}, \nl)$ be a sequence of solutions of \eqref{main_eq}. \\
If   $u_{1,n}$ satisfies   Proposition \ref{alternatives}-(1b), then   as ${ n \to\infty}$, up to subsequences, ${u}_{1,n}$ satisfies one of the followings: \\
(i)    $\wlm-u_0\to\hat{w}$ in $C^{1}_{\textrm{loc}}(\mathbb{T})$, where $\hat{w}$ satisfies  $\Delta \hat{w} +e^{\hat{w}+u_0}=8\pi\mathfrak{M}$, or\\
(ii)  there exists a nonempty finite set $\mathfrak{B}=\{{q}_1,\cdots,{q}_k\}\subset\mathbb{T}$ and $k$-number of sequences  of points $q^{j}_{ n }\in\mathbb{T}$ such that $\lim_{ n \to\infty }q^{j}_{ n }={q}_j$  and  ${w}_{1,n}(q^{j}_{ n })\to+\infty$. Moreover, if  $q\in \mathfrak{B}$, then
\[ \liminf_{{n}\to\infty}\int_{B_d(q)}{e}^{ w_{1, n } }\left(1-\varepsilon_n^2 e^{w_{1,n}}\right) dx \ge 8\pi\ \ \textrm{for any}\ \ d>0.\]\end{lemma}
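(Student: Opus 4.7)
The plan is to adapt the arguments of Lemma \ref{2ndalternatives} to the first component, working with the auxiliary unknown
\[
W_n := w_{1,n} - u_0 = v_{1,n} - 2\ln\varepsilon_n,
\]
which (unlike $w_{1,n}$ itself) is smooth on all of $\mathbb{T}$. Using $\Delta u_0 = 8\pi\sum_i m_i\delta_{p_i} - 8\pi\mathfrak{M}$ (since $|\mathbb{T}|=1$), the first line of \eqref{main_eq} rewrites as
\[
\Delta W_n + e^{W_n + u_0}\bigl(1 - \varepsilon_n^2 e^{W_n + u_0}\bigr) = 8\pi\mathfrak{M} + \mathcal{R}_n,
\]
where $\mathcal{R}_n$ collects the $\sigma_n$-coupling terms. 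Under Proposition \ref{alternatives}-(1b) one has $\varepsilon_n^2 e^{W_n+u_0} = e^{u_{1,n}} \le e^{-\nu_0} < 1$; and from \eqref{assume4} combined with the dichotomy for $u_{2,n}$ (Proposition \ref{alternatives}-(2a) or Lemma \ref{2ndalternatives}), I can show that $\mathcal{R}_n \to 0$ uniformly once $u_{1,n} \to -\infty$ uniformly. Indeed, in case (2a) the factor $(e^{u_{2,n}}-1) = O(\varepsilon_n^2)$ kills the $\sigma_n/\varepsilon_n^2$, while in the $w_{2,n} \to -\infty$ case $(e^{u_{1,n}}+e^{u_{2,n}}) \to 0$ achieves the same cancellation.

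Next I introduce the blow-up set
\[
\mathfrak{B} := \bigl\{q \in \mathbb{T} \mid \exists\, x_n \to q,\ w_{1,n}(x_n) \to +\infty\bigr\}
\]
and argue by dichotomy. \textbf{Case 1: $\mathfrak{B} = \emptyset$.} A compactness argument forces $\sup_\mathbb{T} w_{1,n} \le M$ uniformly, hence $e^{u_{1,n}} \le \varepsilon_n^2 e^M \to 0$ uniformly, so $u_{1,n} \to -\infty$ uniformly and $\mathcal{R}_n \to 0$ by the above. Since $u_0$ is bounded above on $\mathbb{T}$, $W_n$ is also bounded above, and the right-hand side of the $W_n$-equation is uniformly bounded in $L^\infty(\mathbb{T})$. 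Lemma \ref{harnarkineq} then yields two-sided local $L^\infty$-bounds on $W_n$, and standard $W^{2,p}$ plus Schauder estimates give $W_n \to \hat{w}$ in $C^1_{\mathrm{loc}}(\mathbb{T})$ along a subsequence. Passing to the limit produces $\Delta \hat{w} + e^{\hat{w}+u_0} = 8\pi\mathfrak{M}$, which is alternative (i).

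\textbf{Case 2: $\mathfrak{B} \ne \emptyset$.} Fix $q \in \mathfrak{B}$ and a small $d>0$; choose $x_n \in \overline{B_d(q)}$ with $w_{1,n}(x_n) = \sup_{B_d(q)} w_{1,n} \to +\infty$, and set $s_n := \exp(-w_{1,n}(x_n)/2) \to 0$. The rescaling $\bar{w}_{1,n}(y) := w_{1,n}(s_n y + x_n) + 2\ln s_n$ satisfies $\bar{w}_{1,n}(0)=0$, $\bar{w}_{1,n}\le 0$, and
\[
\Delta \bar{w}_{1,n} + e^{\bar{w}_{1,n}}\!\left(1 - \tfrac{\varepsilon_n^2}{s_n^2}\, e^{\bar{w}_{1,n}}\right) = o(1) \quad \text{in}\ L^\infty_{\mathrm{loc}}(\mathbb{R}^2),
\]
with $\varepsilon_n^2/s_n^2 = e^{u_{1,n}(x_n)} \le e^{-\nu_0}$, so along a subsequence $\varepsilon_n^2/s_n^2 \to c_0^2 \in [0, e^{-\nu_0}]$. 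Lemma \ref{harnarkineq} together with the uniform $L^1$ bound from Lemma \ref{cor1} provides $\bar{w}_{1,n} \to w_*$ in $C^2_{\mathrm{loc}}(\mathbb{R}^2)$, where $\Delta w_* + e^{w_*}(1 - c_0^2 e^{w_*}) = 0$, $w_*(0)=0$, $w_*\le 0$. If $c_0 = 0$, the Chen--Li classification gives $\int_{\mathbb{R}^2} e^{w_*}\,dx = 8\pi$; if $c_0 > 0$, the substitution $\phi(y) = w_*(c_0 y) + 2\ln c_0$ reduces to \eqref{limitingpro} with $m=0$ and $\phi \le 2\ln c_0 < 0$, so Lemmas \ref{lemma2.1}--\ref{propertyofentiresolution} yield $\int_{\mathbb{R}^2} e^\phi(1-e^\phi)\,dx \ge 8\pi$. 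Fatou then delivers $\liminf_n \int_{B_d(q)} e^{w_{1,n}}(1-\varepsilon_n^2 e^{w_{1,n}})\,dx \ge 8\pi$; finiteness of $\mathfrak{B}$ follows from Lemma \ref{cor1}.

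The principal technical obstacle is a blow-up point lying at a vortex $q = p_i \in Z$: there the Dirac source in the $w_{1,n}$-equation and the logarithmic blow-down of $u_0$ both threaten the rescaling. I expect to handle this by proving $|x_n - p_i|/s_n \to +\infty$, exploiting the local expansion $w_{1,n}(x) = W_n(x) + 4 m_i \log|x-p_i| + O(1)$ near $p_i$ together with $w_{1,n}(x_n) \to +\infty$ to force the maximizer $x_n$ to stay far from $p_i$ on the $s_n$-scale; then the rescaled Dirac mass escapes to infinity and the limiting equation on $\mathbb{R}^2$ is source-free, as needed.
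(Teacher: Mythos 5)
Your overall architecture (dichotomy on the blow-up set, rescaling by $s_n=e^{-w_{1,n}(x_n)/2}$, Chen--Li when $c_0=0$, the shift $\phi=w_*(c_0\cdot)+2\ln c_0$ together with Lemma \ref{propertyofentiresolution} when $c_0>0$, and smallness of the $\sigma_n$-coupling via \eqref{assume4} and the $u_{2,n}$-dichotomy) matches the paper's proof away from the vortex points, and your Case 1 is essentially the paper's \textit{(case 1)} (one small omission there: to get two-sided bounds from Lemma \ref{harnarkineq} you must rule out $w_{1,n}-u_0\to-\infty$ uniformly, which the paper does via the lower bound $\|e^{w_{1,n}}\|_{L^1}\ge 4\pi\mathfrak{M}$ obtained by integrating the equation; this is easy to add but should be said).

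The genuine gap is your treatment of a blow-up point $q=p_i\in Z$. You propose to show $|x_n-p_i|/s_n\to+\infty$ so that the Dirac source escapes and the limit is always the regular entire equation; but this cannot be proved in general, and the heuristic you give does not work: the fact that $4m_i\ln|x-p_i|$ is very negative near $p_i$ only prevents the maximizer from sitting exactly at $p_i$, it does not prevent $|x_n-p_i|=O(s_n)$, because the regular part $v_{1,n}=w_{1,n}-4m_i\ln|x-p_i|$ may itself peak at scale $s_n$ near the vortex. Concentration at a vortex point with a \emph{singular} bubble is a genuine scenario here (exactly as in Choe--Kim for CSH), and the paper handles it as a separate case: when $|x_n-p|/s_n\le C$ one rescales $\hat v_{1,n}(x)=v_{1,n}(s_nx+x_n)+2(2m_j+1)\ln s_n$, shows boundedness via Lemma \ref{harnarkineq} (using $\hat v_{1,n}(0)=-4m_j\ln(|x_n|/s_n)\ge c$ and the upper bound $|x+\tfrac{x_n}{s_n}|^{4m_j}e^{\hat v_{1,n}}\le1$), and obtains a limit solving $\Delta u_*+e^{u_*}(1-c_2^2e^{u_*})=8\pi m_j\delta_{-y_2}$ in $\mathbb{R}^2$; the minimal local mass $\ge 8\pi(1+2m_j)\ge 8\pi$ then comes from the classification results for the singular problem (Lemma \ref{lemma2.1}, and \cite{ChL2,PrT} when $c_2=0$), not from the regular Chen--Li/CSH profiles you invoke. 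Without this singular-limit analysis (or an actual proof excluding $|x_n-p_i|=O(s_n)$, which is not available), your argument does not cover blow-up at vortex points, so the ``minimal mass'' statement in (ii) is not established there.
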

\begin{proof}
 The function $w_{1, n }$ satisfies
\begin{equation}
  \label{wmaineq}\begin{aligned}
 \Delta {w}_{1,n}+ e^{{w}_{1,n}}\left\{1-\varepsilon_n^2 e^{{w}_{1,n}} - \sigma_n (1-e^{{u}_{2,n}})\right\}+\frac{\sigma_n^2}{\varepsilon_n^2} e^{{u}_{2,n}}(1-\varepsilon_n^2 e^{{w}_{1,n}})
-\frac{\sigma_n}{\varepsilon_n^2}e^{{u}_{2,n}}(1-e^{{u}_{2,n}})
 =8\pi\sum_{i=1}^N m_{i}\delta_{p_i}.
\end{aligned}
\end{equation} By using Lemma \ref{cor1} and  $\frac{\sigma_n}{\varepsilon_n^2}\le\mathfrak{N}$, we see that
\begin{equation}\label{weqp1}\begin{aligned}
\mathfrak{N} +{8\pi\mathfrak{M}}&\ge\int_{\mathbb{T}}\frac{\sigma_n}{\varepsilon_n^2}e^{{u}_{2,n}}(1-e^{{u}_{2,n}})dx +{8\pi\mathfrak{M}}
\\&=  \int_{\mathbb{T}}   e^{{w}_{1,n}}\left\{1-\varepsilon_n^2 e^{{w}_{1,n}} - \sigma_n (1-e^{{u}_{2,n}})\right\}  +\frac{\sigma_n^2}{\varepsilon_n^2} e^{{u}_{2,n}}(1-\varepsilon_n^2 e^{{w}_{1,n}})  dx\\&\ge  \frac{(1-e^{-\nu_0})}{2}\int_{\mathbb{T}}   e^{{w}_{1,n}}dx,
\end{aligned} \end{equation}
and
\begin{equation}\label{weqp1l}\begin{aligned}
 {8\pi\mathfrak{M}}&\le\int_{\mathbb{T}}\frac{\sigma_n}{\varepsilon_n^2}e^{{u}_{2,n}}(1-e^{{u}_{2,n}})dx +{8\pi\mathfrak{M}}
\\&=  \int_{\mathbb{T}}   e^{{w}_{1,n}}\left\{1-\varepsilon_n^2 e^{{w}_{1,n}} - \sigma_n (1-e^{{u}_{2,n}})\right\}  +\frac{\sigma_n^2}{\varepsilon_n^2} e^{{u}_{2,n}}(1-\varepsilon_n^2 e^{{w}_{1,n}})  dx\\&\le   \int_{\mathbb{T}}   e^{{w}_{1,n}}dx+{\varepsilon_n^2}\mathfrak{N}^2.
\end{aligned} \end{equation}
Then we have
\begin{equation}\label{w1l}
0<4\pi\mathfrak{M}\le \|e^{\wlm}\|_{L^1(\mathbb{T})}\le\frac{2(\mathfrak{N}+8\pi\mathfrak{M})}{1- e^{-{\nu_0}}}.
\end{equation}
We consider the following two cases:

\medskip\noindent
\textit{(case 1)} $\sup_{\mathbb{T}}\wlm\le C$ for some constant $C>0$.

 In this case,  Lemma \ref{harnarkineq}  and  \eqref{w1l}  imply
$\wlm-u_0$ is uniformly bounded in $L^\infty(\mathbb{T})$. By $W^{2,2}$ estimation, we see that $\wlm-u_0$ is uniformly bounded in $C^{1,\alpha}(\mathbb{T})$ for some $\alpha\in(0,1)$. The assumption \eqref{assume4}, Proposition \ref{alternatives}-(2), and Lemma \ref{2ndalternatives}, we see that   $\wlm-u_0\to\hat{w}$ in $C^{1}_{\textrm{loc}}(\mathbb{T})$, where $\hat{w}$ satisfies
\[\Delta \hat{w} +e^{\hat{w}+u_0}=8\pi\mathfrak{M}.\]

\medskip\noindent
\textit{(case 2)} $\lim_{{ n \to\infty}}\sup_{\mathbb{T}}\wlm=+\infty.$

In  this case, we claim that if  $p\in \mathfrak{B}$, then  the following ``minimal mass" result holds.
\begin{equation}\label{massofsingularpoint} \liminf_{ n \to\infty }\int_{B_d(p)}{e}^{ w_{1, n } }\left(1-\varepsilon_n^2 e^{w_{1,n}}\right) dx \ge 8\pi\ \ \textrm{for any}\ \ d>0.\end{equation}
In order to prove the claim \eqref{massofsingularpoint}, we will apply the arguments in \cite[Lemma 4.2]{CK}.
We fix a small constant $d>0$  such that $B_d(p)\cap \mathfrak{B}=\{p\}$ and $B_d(p_i)\cap B_d(p_j)=\emptyset$ if $p_i\neq p_j \in Z$. Choose a sequence of points
$\{x_{ n }\}$ in $B_d(p)$ such that
$w_{1, n }(x_{ n }) =\sup_{x\in B_{d}(p)} w_{1, n }(x)\to\infty$ and $x_{ n }\to p$ as ${n}\to\infty$, and let
\[ s_{ n } = \exp\Big(-\frac{1}{2}w_{1, n }(x_{ n }) \Big). \]
Since ${u}_{1,n}$ satisfies    Proposition \ref{alternatives}-(1b),  there exists a constant $\nu>0$ such that
\begin{equation}
  \label{nu}
 \sup_{|x-p|\le d} (w_{1, n }(x) +2\ln\varepsilon_n) \le -\nu_0.
\end{equation}
Along a subsequence, we consider the following three cases.

{\it Case 1: $p\notin {Z}$.}\\
  In this case, we can derive the estimation \eqref{massofsingularpoint} by applying the same arguments for the proof of  \eqref{massofsingularpoint2} in Lemma \ref{2ndalternatives}.

{\it Case 2: $p\in {Z}$ and $\lim_{{n}\to\infty}\frac{|x_{ n } -p|}{s_{ n }}=\infty$.}\\
For the simplicity, we  assume that $p=0$. Note that $v_{1,n}(x)=w_{1, n }(x)- 4m_{j} \ln|x| $ is a smooth function in $B_d(0)$.
For $|x|\le\frac{| x_{n}|}{2s_{n}}$, let
\begin{equation*}
\begin{aligned}
\left\{
 \begin{array}{ll}\bar{v}_{1,n}(x)=v_{1,n}(s_{n} x+ x_{n})+2\ln s_{n}+4m_{j} \ln| x_{n}|, \\
\bar{u}_{2,n}(x)=u_{2,n}(s_{n} x+ x_{n}).
 \end{array}\right.
\end{aligned}
\end{equation*}
From Lemma \ref{cor1}, we see that  $\bar{v}_{1,n}$ satisfies
\begin{equation*}
\begin{aligned}
\left\{
 \begin{array}{ll}
 -\Delta \bar{v}_{1,n}
\\ = \left|\frac{s_{n}}{| x_{n}|}x+\frac{ x_{n}}{| x_{n}|}\right|^{4m_j}e^{\bar{v}_{1,n}}\left\{1-\frac{\varepsilon_n^2}{s_{n}^2} \left|\frac{s_{n}}{| x_{n}|}x+\frac{ x_{n}}{| x_{n}|}\right|^{4m_j}e^{\bar{v}_{1,n}} - \sigma_n (1-e^{\bar{u}_{2,n}})\right\}
\\ +\frac{s_{n}^2\sigma_n^2}{\varepsilon_n^2} e^{\bar{u}_{2,n}}\left(1-\frac{\varepsilon_n^2}{s_n^2} \left|\frac{s_{n}}{| x_{n}|}x+\frac{ x_{n}}{| x_{n}|}\right|^{4m_j}e^{\bar{v}_{1,n}}\right)
 -\frac{s_{n}^2\sigma_n}{\varepsilon_n^2}e^{\bar{u}_{2,n}}(1-e^{\bar{u}_{2,n}})
 \quad\quad\ \textrm{in }~ B_{\frac{| x_{n}|}{2s_{n}}}(0), \\ [3ex]
  \int_{B_{\frac{| x_{n}|}{2s_{n}}}(0)}\left|\frac{s_{n}}{| x_{n}|}x+\frac{ x_{n}}{| x_{n}|}\right|^{4m_j}e^{\bar{v}_{1,n}}\left|1-\frac{\varepsilon_n^2}{s_{n}^2} \left|\frac{s_{n}}{| x_{n}|}x+\frac{ x_{n}}{| x_{n}|}\right|^{4m_j}e^{\bar{v}_{1,n}}  \right| dx \le {8\pi\mathfrak{M}},
 \\ \frac{\varepsilon_n^2}{s_{n}^2} \big|\frac{s_{n}}{| x_{n}|}x +\frac{ x_{n}}{| x_{n}|}\big|^{4m_{j}}
 e^{\bar{v}_{1,n}} \le e^{-\nu_0}<1 \quad \mbox{in }~ B_{\frac{| x_{n}|}{2s_{n}}}(0).
 \end{array}\right.
\end{aligned}
\end{equation*}
Since $\bar{v}_{1,n}(0) =w_{1, n }( x_{n})+2\ln s_{n} =0$ and
\[ \bar{v}_{1,n}(x)=w_{1, n }(s_{n} x+ x_{n})+2\ln s_{n}-4m_{j}
 \ln\Big|\frac{s_{n}}{| x_{n}|}x+\frac{ x_{n}}{| x_{n}|}\Big|\le4m_{j} \ln2
 \quad \mbox{for }~ |x|<\frac{| x_{n}|}{2s_{n}}, \]
it follows from Lemma \ref{harnarkineq} that $\bar{v}_{1,n}$ is bounded in
$C^0_{\textrm{loc}}(B_{| x_{n}|/(2s_{n})}(0))$. Passing to subsequences,
we may assume that $\lim_{{n}\to\infty}\frac{\varepsilon_n^2}{s_{n}^2}=c_1^2$ for some $c_1\in[0,1)$,
$\lim_{{n}\to\infty}\frac{ x_{n}}{| x_{n}|}=y_0$ for some $y_0\in S^1$, and
$\bar{v}_{1,n}\to v^*$ in $C^2_{\textrm{loc}}(\mathbb{R}^2)$, where $v^*$ satisfies
\begin{equation*}
\begin{aligned}
\left\{
 \begin{array}{ll}
\Delta v^*+  e^{v^*}(1-c_1^2e^{v^*}) =0 \quad \textrm{in }~ \mathbb{R}^2, \\ [2ex]
 \int_{\mathbb{R}^2}|e^{v^*}(1-c_1^2e^{v^*})|dx \le {8\pi\mathfrak{M}}, \\ v^*\le 4m_{j} \ln 2 \quad\mbox{in }~ \mathbb{R}^2.
 \end{array}\right.
\end{aligned}
\end{equation*}
By the same argument for  the proof of  \eqref{massofsingularpoint2} in Lemma \ref{2ndalternatives},   \cite{ChL1} and  Lemma \ref{propertyofentiresolution} imply the estimation \eqref{massofsingularpoint}.

{\it Case 3:  $p\in {Z}$ and $\frac{| x_{n} -p|}{s_{n}}\le C$ for some constant $C>0$.}\\
As in Case 2, we assume that $p= 0$ and $w_{1, n }(x)= 4m_{j} \ln|x| +v_{1,n}(x)$ on $B_d(0)$
for a function $v_{1,n} \in C^\infty (B_d(0))$. We may assume that $B_d(0)\cap Z=\{0\}$.
For $|x|\le\frac{d}{2s_{n}}$, let
\begin{equation*}
\begin{aligned}
\left\{
 \begin{array}{ll}\hat{v}_{1,n}(x)=v_{1,n}(s_{n} x + x_{n}) +2(2m_{j}+1)\ln s_{n},
\\ \hat{u}_{2,n}(x)=u_{2,n}(s_{n} x + x_{n}).
 \end{array}\right.
\end{aligned}
\end{equation*}
Then we have
\begin{equation*}
\begin{aligned}
\left\{
 \begin{array}{ll}
 -\Delta \hat{v}_{1,n}
\\=  \left| x + \frac{x_{n}}{s_n}\right|^{4m_j}e^{\hat{v}_{1,n}}\left\{1-\frac{\varepsilon_n^2}{s_n^2} \left| x + \frac{x_{n}}{s_n}\right|^{4m_j}e^{\hat{v}_{1,n}} - \sigma_n (1-e^{\hat{u}_{2,n}})\right\}\\+\frac{s_n^2\sigma_n^2}{\varepsilon_n^2} e^{\hat{u}_{2,n}}\left(1-\frac{\varepsilon_n^2}{s_n^2} \left| x + \frac{x_{n}}{s_n}\right|^{4m_j}e^{\hat{v}_{1,n}}\right)
-\frac{s_n^2\sigma_n}{\varepsilon_n^2}e^{\hat{u}_{2,n}}(1-e^{\hat{u}_{2,n}})
 \quad \mbox{in }~ B_{\frac{d}{2s_{n}}}(0), \\ [3ex]
 \int_{|x|\le d/(2s_{n})} \left| x + \frac{x_{n}}{s_n}\right|^{4m_j}e^{\hat{v}_{1,n}}\left|1-\frac{\varepsilon_n^2}{s_n^2} \left| x + \frac{x_{n}}{s_n}\right|^{4m_j}e^{\hat{v}_{1,n}}  \right| dx \le {8\pi\mathfrak{M}}.
 \end{array}\right.
\end{aligned}
\end{equation*}
We note that
\[ \Big|x+\frac{ x_{n}}{s_{n}}\Big|^{4m_{j}}e^{\hat{v}_{1,n}}
 = s_{n}^2 e^{w_{1, n } (s_{n} x + x_{n})} \le 1 \quad\mbox{for }~ |x|\le \frac{d}{2s_{n}}, \]
and consequently
\begin{equation*}
\begin{aligned}
\hat{v}_{1,n}(x) \le -4m_{j} \ln \Big| x+\frac{ x_{n}}{s_{n}}\Big|
 \le -4m_{j} \ln (|x|-C) \quad \mbox{for }~ C<|x|<\frac{d}{2s_{n}}.
\end{aligned}
\end{equation*}
We also see that
\[ \frac{\varepsilon_n^2}{s_{n}^2}\Big|x+\frac{ x_{n}}{s_{n}}\Big|^{4m_{j}}e^{\hat{v}_{1,n}}
 \le e^{-\nu_0}<1 \quad\mbox{for }~ |x|\le \frac{d}{2s_{n}}, \]
and
\begin{equation*}
\hat{v}_{1,n}(0) = w_{1, n }( x_{n}) -4m_{j} \ln| x_{n}|+2(1+2m_{j}) \ln s_{n}
 =-4m_{j} \ln \Big(\frac{| x_{n}|}{s_{n}}\Big) \ge c,
\end{equation*}
for some constant $c\in\R$.
Hence, it follows from Lemma \ref{harnarkineq}  that $\hat{v}_{1,n}$ is bounded in
$C^0_{\textrm{loc}}(B_{d/(2s_{n})})$. Passing to subsequences,
we may assume that $\lim_{{n}\to\infty}\frac{ x_{n}}{s_{n}}=y_2\in\mathbb{R}^2$,
$\lim_{{n}\to\infty}\frac{\varepsilon_{n}}{s_n}=c_2\in [0,1)$ and
$\hat{v}_{1,n}\to v_*$ in $C^2_{\textrm{loc}}(\mathbb{R}^2)$, where $v_*$ satisfies
\begin{equation*}
\begin{aligned}
\left\{
 \begin{array}{ll}
\Delta v_*+ |x+y_2|^{4m_{j}}e^{v_*}
 (1-c_2^2|x+y_2|^{4m_{j}}e^{v_*})=0
 \quad\mbox{in }~\mathbb{R}^2, \\ [2ex]
  \int_{\mathbb{R}^2} |x+y_2|^{4m_{j}}e^{v_*}
|(1-c_2^2|x+y_2|^{4m_{j}}e^{v_*})|dx \le {8\pi\mathfrak{M}},
 ~\quad \sup_{x\in\mathbb{R}^2}|x+y_2|^{4m_{j}}e^{v_*}\le1.
 \end{array}\right.
\end{aligned}
\end{equation*}
Letting $u_*(x)=v_*(x)+4m_{j} \ln|x+y_2|$, we have
\begin{equation*}
\Delta u_*+ e^{u_*}(1-c_2^2e^{u_*})
=8\pi m_{j}\delta_{-y_2} \ \ \textrm{in}\ \ \mathbb{R}^2.
\end{equation*}
If $c_2=0$, then we see that $\int_{\mathbb{R}^2}   e^{u_*} dx\ge 8\pi(1+2m_{j})$ (\cite{ChL2, PrT}).
If $c_2>0$,  then we consider the function ${u}_{1,n}(x)=u_*(c_2x)+2\ln c_2$. Then $u$ satisfies
\begin{equation*}
\begin{aligned}
\left\{
\begin{array}{ll}
 \Delta u+  e^{u}(1-e^{u})  =8\pi m_{j}\delta_{-y_2/c_2} \quad\mbox{in }~ \mathbb{R}^2, \\ [1ex]
 \int_{\mathbb{R}^2} | e^{u}(1-e^{u}) | dx \le {8\pi\mathfrak{M}}, ~\quad \sup_{\mathbb{R}^2}u<-\nu_0 \quad \mbox{in }~ \mathbb{R}^2.
 \end{array}\right.
\end{aligned}
\end{equation*}
It follows from Lemma \ref{lemma2.1} that
\[ \int_{\mathbb{R}^2}  e^{u}(1-e^{u}) dx > 8\pi (1 +2m_{j}), \]
and Fatou's lemma implies that $\displaystyle\liminf_{{n}\to\infty} \int_{B_d(q)}
 e^{w_{1, n }} (1-\e^2 e^{w_{1, n }})  dx \ge 8\pi(2m_{j}+1)$. Now we complete the proof of Lemma \ref{1stalternatives}.
\end{proof}

Now we are realy to complete the proof of       Theorem \ref{BrezisMerletypealternatives}.\\
   \textbf{Proof of Theorem \ref{BrezisMerletypealternatives}: }
Lemma \ref{1stalternatives} shows that if along a subsequence,
$\lim_{{n}\to\infty} \big(\sup_\mathbb{T} w_{1, n } \big) =+\infty$, then
$\{w_{1, n }\}$ has a nonempty finite blow-up set $\mathfrak{B}\subset \mathbb{T} $,
and $|\mathfrak{B}| \le \mathfrak{M}$. We also see that
for any compact set $K\subseteq\mathbb{T}\setminus \mathfrak{B}$,
there exists a constant $C_{K}>0$ such that
\begin{equation}
  \label{locbdd}
 \sup_{K}w_{1, n }\le C_{K}.
\end{equation}
In order to complete the proof of Theorem \ref{BrezisMerletypealternatives}, in view of  Proposition \ref{alternatives}, Lemma \ref{2ndalternatives},  and  Lemma \ref{1stalternatives},  it is enough to show that the blow up phenomena implies the concentration of mass as in \cite{BT, BM,  CK}.
   Compared  with the previous results \cite{BT, BM,  CK}, in our case,  we note that the convergence rate of ${u}_{2,n}$ in Proposition \ref{alternatives}-(2a) and  the gradient estimation in   Lemma \ref{2nd_grad} would play an important role  in the estimation for the pohozaev identity \eqref{poho} below.

Now we assume that the case (ii) in  Lemma \ref{1stalternatives} occurs, and thus     $\mathfrak{B}\neq\emptyset$.
Choose a small constant $d>0$ satisfying
\[ B_{2d}(x)\cap B_{2d}(y)=\emptyset \quad\mbox{for }~ x, y\in \mathfrak{B}
 \quad\mbox{and}\quad x\neq y. \]
For each $p\in \mathfrak{B}$, we let $\{x_{ n ,p}\}$ be a sequence of points such that
\[x_{ n ,p}\to p\in \mathfrak{B}\ \ \textrm{ and}\ \ w_{1, n }(x_{ n ,p})=\sup_{B_d(p)} w_{1, n }\to\infty\ \ \textrm{ as}\ \ {n}\to\infty.\]
Recall that ${w}_{1,n}$ satisfies \begin{equation}
  \label{wmaineq2}\begin{aligned}
 \Delta {w}_{1,n}+ e^{{w}_{1,n}}\left\{1-\varepsilon_n^2 e^{{w}_{1,n}} - \sigma_n (1-e^{{u}_{2,n}})\right\}+\frac{\sigma_n^2}{\varepsilon_n^2} e^{{u}_{2,n}}(1-\varepsilon_n^2 e^{{w}_{1,n}})
-\frac{\sigma_n}{\varepsilon_n^2}e^{{u}_{2,n}}(1-e^{{u}_{2,n}})
 =8\pi\sum_{i=1}^N m_{i}\delta_{p_i}.
\end{aligned}
\end{equation}
We shall prove that
\begin{equation}
  \label{eq:-infty}
 \lim_{{n}\to\infty}\Big(\inf_{\partial B_r(p)}(w_{1, n }-u_0)\Big)=-\infty
\end{equation}
for any $r\in(0,d]$ and all $p\in \mathfrak{B}$.
Once we get the estimation \eqref{eq:-infty}, then (\ref{locbdd}) and Lemma \ref{harnarkineq} imply that
\[ \lim_{{n}\to\infty} \big( \sup_{\mathbb{T}\setminus[ \cup_{q_j\in\mathfrak{B}} B_r(q_j)]} (w_{1, n }-u_0) \big)
 =-\infty \quad \mbox{for any }~ r\in(0,d]. \]
To prove \eqref{eq:-infty}, we argue by contradiction and
suppose that there exist $r\in(0,d]$ and $p\in \mathfrak{B}$ such that
\begin{equation*}
 \begin{aligned}
\lim_{{n}\to\infty} \Big(\inf_{\partial B_r(p)}(w_{1, n }-u_0)\Big)\ge c,
 \end{aligned}
\end{equation*}
for some constant $c\in\mathbb{R}$. For the simplicity, we assume that $p=0$.
By using (\ref{locbdd}) and Lemma \ref{harnarkineq}, we can verify that $\{w_{1, n } -u_0\}$ is bounded
in $C_{loc}^0 (B_{2d}(0) \setminus \{0\})$. Then elliptic estimates imply that
there exists a function $\xi\in C^2_{loc}(B_{2d}(0)\setminus \{0\})$ such that
along a subsequence $w_{1, n } -u_0\to\xi$ in $C^1_{loc}(B_{2d}(0) \setminus \{0\})$.
In view of Lemma \ref{lemma_RT}, we see that
\begin{equation*}
 \begin{aligned}
{e}^{ w_{1, n } }\left(1-\varepsilon_n^2 e^{w_{1,n}}\right)
\to  e^{\xi+u_0} +\alpha_p\delta_{0} \quad (\alpha_p\ge 8\pi)
 \end{aligned}
\end{equation*}
in the sense of measure on $B_{2d}(0)$.
By Lemma \ref{cor1} and Fatou's lemma, we have that $e^{\xi+u_0}\in L^1(B_{2d}(0))$.
Moreover, Green's representation formula implies that
\begin{equation*}
\begin{aligned}
\xi(x)=-\frac{\alpha_p}{2\pi} \ln|x| +\phi(x) +\eta(x),
\end{aligned}
\end{equation*}
where $\eta\in C^1(B_{\frac{d}{2}}(0))$, and
\begin{equation}
  \label{definitionofphi}
\begin{aligned}
\phi(x)=\frac{1}{2\pi} \int_{B_{d}(0)} \ln\Big(\frac{1}{|x-y|}\Big) e^{(\xi+u_0)(y)}dy.
\end{aligned}
\end{equation}
We note that
\[ \phi\ge\frac{1}{2\pi}\ln\Big(\frac{1}{2d}\Big)\|e^{\xi +u_0}\|_{L^1(B_{d}(0))}
 \quad\mbox{in }~ B_{d}(0). \]
Then we see that $e^{\xi(x)}=|x|^{-\alpha_p/2\pi} e^{\phi+\eta}\ge c|x|^{-\alpha_p/2\pi}$
for $0<|x|\le \frac{d}{2}$ and some constant $c>0$.
Then the integrability of $e^{\xi+u_0}$ implies that
\begin{equation}
  \label{contradtioneq}
4\pi(1+2m)>\alpha_p,
\end{equation}
where $m=m_{j}$ if $p=q_{j}\in \mathfrak{B}\cap {Z}$, and $m=0$ if $p\in \mathfrak{B}\setminus {Z}$.\\
Let $\phi_{1, n }(x)=w_{1, n }(x)-4m\ln|x|$. Then we have the following equation:
\begin{equation}
\begin{aligned}  \label{wisingentirernsoleq}
\left\{
 \begin{array}{ll}
\Delta\pl+\sigma_n\Delta {u}_{2,n}+\frac{(1-\sigma_n^2)}{\varepsilon_n^2}e^{{u}_{1,n}}(1-e^{{u}_{1,n}})-\frac{(1-\sigma_n^2)\sigma_n}{\varepsilon_n^2}e^{{u}_{1,n}}(1-e^{{u}_{2,n}})=0,\\
\sigma_n\Delta\pl+ \Delta {u}_{2,n}+\frac{(1-\sigma_n^2)}{\varepsilon_n^2}e^{{u}_{2,n}}(1-e^{{u}_{2,n}})-\frac{(1-\sigma_n^2)\sigma_n}{\varepsilon_n^2}e^{{u}_{2,n}}(1-e^{{u}_{1,n}})=0.
 \end{array}\right.
\end{aligned}
\end{equation}
By multiplying the first equation in  (\ref{wisingentirernsoleq}) by $x\cdot\nabla \phi_{1, n }$  and  the second equation in  (\ref{wisingentirernsoleq}) by $x\cdot\nabla {u}_{2,n}$,   we get that for any constants $c_0,\ c_1 \in\mathbb{R}$,  and $r\in\left(0,\frac{d}{2}\right)$,
\begin{equation}
\begin{aligned} \label{poho}
&\int_{\partial B_r(0)} \left[ \frac{ (x\cdot\nabla \phi_{1, n })^2}{|x|}
 -\frac{|x||\nabla \phi_{1, n }|^2}{2} +\frac{(x\cdot\nabla {u}_{2,n})^2}{|x|}
 -\frac{|x||\nabla {u}_{2,n}|^2}{2} \right] d\sigma \\
&+\sigma_n\int_{\partial B_r(0)} \left[ \frac{ 2 (x\cdot\nabla \phi_{1, n }) (x\cdot\nabla {u}_{2,n})}{|x|}
 -|x| \nabla \phi_{1, n }\cdot \nabla {u}_{2,n} \right] d\sigma \\
&+\frac{(1-\sigma_n^2)}{\varepsilon_n^2}\int_{\partial B_r(0)}  |x| \left[  e^{{u}_{1,n}}\left(1-\frac{1}{2}e^{{u}_{1,n}}\right) + e^{{u}_{2,n}}\left(1-\frac{1}{2}e^{{u}_{2,n}}-c_0\right)-\sigma_n\left(e^{{u}_{1,n}}+e^{{u}_{2,n}}-e^{{u}_{1,n}+{u}_{2,n}}-c_1\right)\right] d\sigma \\
 %&=  \frac{1}{\varepsilon_n^2}\int_{B_r(0)} 2(1-\sigma_n^2)e^{{u}_{1,n}}\left(1-\frac{ e^{{u}_{1,n}}}{2}\right)+ 4m(1-\sigma_n^2)e^{{u}_{1,n}}\left(1-  e^{{u}_{1,n}} \right)dx
%\\
 %&+ \frac{1}{\varepsilon_n^2}\int_{B_r(0)} 2(1-\sigma_n^2)\left\{e^{{u}_{2,n}}\left(1-\frac{1}{2} e^{{u}_{2,n}}\right) -c_0\right\}- 4m\sigma_n(1-\sigma_n^2)e^{{u}_{1,n} }\left(1-  e^{{u}_{2,n}}\right)dx
%\\&-\frac{1}{\varepsilon_n^2}\int_{B_r(0)} 2(1-\sigma_n^2)\sigma_n\left\{e^{{u}_{1,n}}+e^{{u}_{2,n}}-e^{{u}_{1,n}+{u}_{2,n}} -c_1\right\} dx
%\\
 &=   \int_{B_r(0)} 2(1-\sigma_n^2)e^{\pl+4m\ln|x|}\left(1-\frac{ e^{{u}_{1,n}}}{2}\right)+ 4m(1-\sigma_n^2)e^{\pl+4m\ln|x|}\left(1-  e^{{u}_{1,n}} \right)dx
\\
 &+ \frac{ 2(1-\sigma_n^2)}{\varepsilon_n^2}\int_{B_r(0)}\left\{e^{{u}_{2,n}}\left(1-\frac{1}{2} e^{{u}_{2,n}}\right) -c_0-\sigma_n\left(  e^{{u}_{2,n}}  -c_1\right)\right\}dx
\\&- (1-\sigma_n^2)\int_{B_{r}(0)}4m\sigma_ne^{\pl+4m\ln|x|}\left(1-  e^{{u}_{2,n}}\right)+2\sigma_n  e^{\pl+4m\ln|x|}\left(1 - e^{{u}_{2,n}}\right)dx.
\end{aligned}
\end{equation}
 In view of   Proposition \ref{alternatives}-(2a) and  Lemma \ref{2ndalternatives}, we note that $u_{2,n}$ satisfies
\begin{equation}\label{1}\textrm{either}\ \  \|{u}_{2,n}\|_{L^\infty(\mathbb{T})}=O(\varepsilon_n^2),\ \  \textrm{or}\  \ \sup_{\mathbb{T}}\left({u}_{2,n}   -2\ln\varepsilon_n\right)\to-\infty\ \ \textrm{as}\  \ n\to\infty.\end{equation}
Moreover, by Lemma \ref{2nd_grad}, we have  a constant $C>0$, independent of ${n}\ge1$, such that
\begin{equation}\label{2}\left\|\nabla {u}_{2,n} \right\|_{L^\infty(\mathbb{T})}\le  {C}.\end{equation}
Let $c_0=\frac{1}{2}$ and $c_1=1$ if $\|{u}_{2,n}\|_{L^\infty(\mathbb{T})}=O(\varepsilon_n^2)$ as $n\to\infty$. Otherwise, let $c_0=c_1=0$. From \eqref{1}, \eqref{2}, and $w_{1, n } -u_0\to\xi$ in $C^1_{loc}(B_{2d}(0) \setminus \{0\})$,   we see that for any  $r\in\left(0,\frac{d}{2}\right)$,  as $n\to\infty$,
\begin{equation}\label{tozeroasrzero}
\begin{aligned}
&\int_{\partial B_r(0)} \left[ \frac{ (x\cdot\nabla \phi_{1, n })^2}{|x|}
 -\frac{|x||\nabla \phi_{1, n }|^2}{2}  \right] d\sigma  +O(r^2\max_{ \partial B_r(0)}e^{{w}_{1,n}})  +O(r) +o(1) \\
& =\int_{B_{r}(0)} 2e^{\pl+4m\ln|x|  }\left(1-\frac{\varepsilon_n^2 e^{w_{1,n}}}{2}\right)+4me^{\pl+4m\ln|x|  }\left(1-\varepsilon_n^2 e^{w_{1,n}}\right)dx +o(1)\\
& \ge\int_{B_{r}(0)} ( 2+4m   )e^{\pl+4m\ln|x|  }\left(1- \varepsilon_n^2 e^{w_{1,n}}\right)dx +o(1).
\end{aligned}
\end{equation}
Let $\varphi_0(x)=(\xi+u_0)(x)-4m\ln|x|$. Then $\phi_{1, n }(x)=w_{1, n }(x)-4m\ln|x| \to \varphi_0(x)$ in $C^1_{loc}(B_{2d}(0) \setminus \{0\})$.
By letting ${n}\to\infty$ in (\ref{tozeroasrzero}), we get that   for any  $r\in\left(0,\frac{d}{2}\right)$,
\begin{equation}
  \label{finaltozeroasrzero}
\begin{aligned}
&\int_{\partial B_{r}(0)} \Big[ \frac{(x\cdot\nabla \varphi_0)^2}{|x|}
 -\frac{|x||\nabla \varphi_0|^2}{2} \Big] d\sigma +O(  r^{2+4m  }\max_{ \partial B_r(0)}e^{\varphi_0}) +O(r)
 \ge (2+4m  )\Big(\alpha_p +\int_{B_{r}(0)}e^{\xi+u_0}dx\Big).
\end{aligned}
\end{equation}
Let $s=\max\{0,\frac{\alpha_p}{2\pi}-4m\}$. Then there exists a constant $c>0$ such that
$|x|^{4m}e^{\varphi_0} =e^{\xi+u_0}\le c|x|^{-s}e^\phi$ in $B_{\frac{d}{2}}(0)$.
We note that $s\in[0,2)$ from (\ref{contradtioneq}).
In view of (\ref{definitionofphi}) and Corollary 1 in \cite{BM},
we see that $e^{|\phi|}\in L_{loc}^k(B_{d}(0))$ for any $k\in [1,\infty)$.
Since $\xi\in C^2_{loc}(B_{2d}(0)\setminus \{0\})$, we have
$|x|^{4m}e^{\varphi_0}\in L^t(B_{d}(0))$ for any $t\in(1,\frac{2}{s})$.
Then H\"{o}lder's inequality implies that $\phi\in L^\infty(B_{d}(0))$ and   there is a constant $C>0$ satisfying
\begin{equation}
  \label{s}
|x|^{4m}e^{\varphi_0} =e^{\xi+u_0}\le C|x|^{-s}\ \ \textrm{in}\ \ B_{\frac{d}{2}}(0).
\end{equation}
We note that for $|x|=r<\frac{d}{2}$,
\begin{equation*}
\begin{aligned}
 |\nabla\phi(x)|&\le  \frac{1}{2\pi} \Big[\int_{B_{d}(0)\setminus B_{r/2}(x)} \frac{e^{(\xi+u_0)(y)}}{|x-y|} dy
 +\int_{B_{d}(0)\cap B_{r/2}(x)} \frac{e^{(\xi+u_0)(y)}}{|x-y|} dy \Big].
\end{aligned}
\end{equation*}
Fix $t\in(1,\frac{2}{s})$ and choose a constant $a\in (0,\min\{1,2-s\})$ such that $\frac{at}{t-1} <2$. Then
H\"{o}lder's inequality implies that
\[ \int_{B_{d}(0)\setminus B_{r/2}(x)} \frac{e^{(\xi+u_0)(y)}}{|x-y|} dy
 \le  \int_{B_{d}(0)\setminus B_{r/2}(x)} \frac{Cr^{a-1}}{|y-x|^a}
 e^{(\xi +u_0)(y)} dy \le Cr^{a-1}. \]
Since $|x|=r$, we have $B_{r/2}(x)\subseteq\mathbb{T}\setminus B_{r/2}(0)$.
It follows from (\ref{s}) and $\xi\in C^2_{loc}(B_{2d}(0)\setminus \{0\})$ that
\[ \int_{B_{d}(0)\cap B_{r/2}(x)} \frac{e^{(\xi+u_0)(y)}}{|x-y|} dy
 \le \int_{|y-x|\le r/2} \frac{Cr^{-s}}{|y-x|} dy =O(r^{1-s}). \]
Since $a\in(0,2-s)$, we see that $|\nabla \phi(x)| =O(|x|^{a-1}+1)$ as $|x|\to 0$.
Consequently $\nabla\varphi_0(x)=-\frac{\alpha_p x}{2\pi|x|^2}+\nabla h(x)$
with $|\nabla h(x)| =O(|x|^{a-1}+1)$ as $|x|\to 0$.
Letting   $r\to 0$ in (\ref{finaltozeroasrzero}),
we obtain that $( 2+4m   )\alpha_p\le\frac{\alpha_p^2}{4\pi}$, which contradicts (\ref{contradtioneq}).

Therefore,  from Lemma \ref{harnarkineq}, we get that $w_{1, n }-u_0\to-\infty$ and
$w_{1, n }\to-\infty$ uniformly on any compact subset of $\mathbb{T}\setminus \mathfrak{B}$.
By  Lemma \ref{lemma_RT} and Lemma \ref{cor1}, along a subsequence,
${e}^{ w_{1, n } }\left(1-\varepsilon_n^2 e^{w_{1,n}}\right)$ converges to a nonnegative measure.   Since $w_{1, n }\to-\infty$
uniformly on any $K\Subset \mathbb{T}\setminus \mathfrak{B}$,  the measure is  a sum of Dirac measures, and Lemma \ref{1stalternatives} implies
that each Dirac mass should be greater than or equal to $8\pi$.
\hfill$\qed$

%For simplicity, we denote $Q_{q}$ by $Q$.

\bigskip

%\noindent{\bf Acknowledgement}\\
%Y. Lee was supported by the National Research Foundation of Korea(NRF) grant funded by the %Korea government(MSIT) (No. NRF-2018R1C1B6003403).

\end{document}